\newcommand{\jonly}[1]{}
\newcommand{\aronly}[1]{#1}
\newcommand{\talkonly}[1]{}
\newcommand{\talkno}[1]{#1}
\def\Z{{\mathbb Z}} \def\R{{\mathbb R}}  
\long\def\comment#1\endcomment{}
\def\Hom{\mathop{\fam0 Hom}}
\def\Img{\mathop{\fam0 Im}}
\def\Ker{\mathop{\fam0 Ker}}
\renewcommand{\t}[1]{\ensuremath{\widetilde{#1}}}
\theoremstyle{plain}
\newtheorem{theorem}{Theorem}[section]
\newtheorem{lemma}[theorem]{Lemma}
\newtheorem{statement}[theorem]{Statement}
\newtheorem{conjecture}[theorem]{Conjecture}
\theoremstyle{definition}
\newtheorem{remark}[theorem]{Remark}
\begin{document}
 
\talkonly{
\huge
\centerline{\bf Embeddability of joinpowers,} 
\smallskip
\centerline{\bf and minimal rank of partial matrices}
\medskip
\centerline{\it Arkadiy Skopenkov and Oleg Styrt}
\bigskip
}

\title{Embeddability of joinpowers \\ and minimal rank of partial matrices\footnote{We are grateful to E. Alkin, B.H. An, S. Dzhenzher, R. Fulek, T. Garaev, R. Karasev, A. Miroshnikov, and anonymous referees for useful discussions.}}


\author{Arkadiy Skopenkov and Oleg Styrt\footnote{Both authors: 
Moscow Center for Continuous Mathematical Education; supported by Russian Science Foundation,  Grant N 25-21-00685, \url{https://rscf.ru/en/project/25-21-00685/}.
A. Skopenkov: \texttt{skopenko@mccme.ru}, \texttt{https://users.mccme.ru/skopenko}.}}

\date{}
\maketitle

\begin{abstract} 
A general position map $f:K\to M$ of a $k$-dimensional simplicial complex to a $2k$-dimensional manifold (for $k=1$, of a graph to a surface) is a \textit{$\Z_2$-embedding} if $|f\sigma \cap f\tau|$ is even for any vertex-disjoint 
$k$-dimensional faces (for $k=1$, edges) $\sigma,\tau$. 
We present criteria for $\Z_2$-embeddability of certain $k$-dimensional complex (for $k=1$, of any graph) to a $2k$-dimensional manifold. 
These criteria are 

$\bullet$ a `Kuratowski-type' version of the Fulek-Kyn\v{c}l-Bikeev criteria (for $k=1$), and 

$\bullet$ a converse to the Dzhenzher-Skopenkov necessary condition (for $k>1$). 
 
Our higher-dimensional criterion allows us to reduce the modulo 2 K\"uhnel problem on embeddings to a purely algebraic problem. 

Our proof is interplay between geometric topology, combinatorics and linear algebra. 
It is based on calculation of generators in the homology of certain configuration space (the deleted product) of certain complex (joinpower). 
\end{abstract}

\noindent
{\em MSC 2020}: 57Q35,  55N91, 	05E45. 

\noindent{\em Keywords:} embedding, mod 2 embedding, joinpower, deleted product, homology. 

\tableofcontents

\section{Introduction and main results}\label{s:mr}

\subsection*{Description of main results}

We abbreviate `$k$-dimensional' to just `$k$-'. 

In this paper a {\bf complex} is the set of some closed faces of some simplex (a longer term is finite simplicial complex). 
We identify this set with the union of those faces (i.e. with the \emph{body} of the complex). 
A {\bf $k$-complex} is the union of some closed faces of dimension at most $k$ (for $k=1$ this is a graph). 

A general position map $K\to M$ of a $k$-complex to a $2k$-manifold (see the definition e.g. in \cite[\S1.1]{Sk24}) is called a \textbf{$\Z_2$-embedding} if $|f\sigma \cap f\tau|$ is even for any vertex-disjoint $k$-faces $\sigma,\tau$. 

We present criteria for $\Z_2$-embeddability of certain $k$-complex (for $k=1$, of any graph) to $2k$-manifolds. 

\talkonly{\newpage}
These criteria are 

$\bullet$ a `Kuratowski-type' version of the Fulek-Kyn\v cl-Bikeev criteria, i.e., a version involving only obstructions coming from $K_5$ and $K_{3,3}$ (for $k=1$\talkno{; Theorems \ref{t:embntfs}, \ref{t:embntf} and \ref{t:embntfg}}), and  

$\bullet$ a converse to the Dzhenzher-Skopenkov necessary condition (for $k>1$\talkno{; Theorem \ref{t:embntfk}}). 

Our higher-dimensional criterion allows us to reduce the modulo 2 K\"uhnel problem on embeddings to a purely algebraic problem\talkno{ (Conjecture \ref{p:open})}. 

Our proof is an interplay between geometric topology, combinatorics, and linear algebra. 
It is based on the calculation of generators in the homology of a certain configuration space (the deleted product) of a certain complex (joinpower), see Theorems \ref{r:dedu}, \ref{l:h2sym},   
and Remark \ref{r:othcom}. 

\subsection*{Some motivation}

\talkno{This subsection is formally not used below.} 
 
The classical Heawood inequality provides a lower estimation for a number $g$ such that the complete graph $K_n$ on $n$ vertices embeds into the sphere with $g$ handles. 
A higher-dimensional analogue of the Heawood inequality is the K\"uhnel conjecture on embeddings.
In a simplified form it asks for an analogous estimation of the `complexity' (i.e., of $\dim H_k(M;\Z_2)$) of a $2k$-manifold $M$ into which the union of all $k$-faces of $n$-simplex is embedded. 
See the detailed discussion of the conjecture in \cite[Remark 1.3]{DS22}, cf. \cite[\S4]{Ku23}.  

Recent results on the conjecture work not only for embeddings but for $\Z_2$-embeddings (\cite[Theorem 1]{PT19}, \cite{DS22}; see \cite{KS21} for a simplified direct proof of \cite[Theorem 1]{PT19}).  
These results involve a necessary condition for $\Z_2$-embeddability of certain $k$-complexes to $2k$-manifolds (the `only if' parts of Theorems \ref{t:embntfs}, \ref{t:embntf} and \ref{t:embntfk}; similarly one proves the `only if' part of Theorem \ref{t:embntfg}, which part was not stated). 
This condition is stated (implicitly in \cite{PT19} and explicitly in \cite{DS22, KS21}) in terms of finding the minimal rank of a matrix with certain relations on its entries, and having a certain special form. 
So this condition is related to the low rank matrix completion problem, and to the Netflix problem from machine learning. 
\emph{Our main result} is the sufficiency of this condition\talkno{ (the `if' parts of Theorems \ref{t:embntfs}--\ref{t:embntfg} and \ref{t:embntfk})}.  
\aronly{See survey \cite{DGN+} and the references therein.} 


\talkonly{\newpage}
Known results on embeddability of $k$-complexes in $2k$-manifolds are described in \cite[Remark 1.1.7.bc]{Sk24}. 
Some motivations for studies of $\Z_2$-embeddability are presented in \cite[\S1.2]{Sk24}. 
The particular case of $\Z_2$-embeddability of graphs to surfaces is actively studied\talkno{, see survey \cite{Sc13} and recent papers \cite{Ky16, FK19,  Bi21}}. 

\emph{Our main instrument} is a description of generators in the homology of certain configuration space (deleted product), see Theorem \ref{l:h2sym} (and Theorem \ref{r:dedu} due to \cite{Ho07}). 
For related results 

$\bullet$ on graphs see \cite{Ni00, ST03, Ho07, BF09, Ho09, FH10}, \cite[Theorem 3.16]{KP11}, \cite{Ky16, HNT, AK20, Dz25}, and references in \cite{AK20};  

$\bullet$ on higher-dimensional complexes see \cite[\S19]{Di08}, \cite[Theorem 3.8]{Me06}. 

(References to papers published earlier than 2000 can be found in the above papers.)




\subsection*{Notation and conventions}

Below all matrices have $\Z_2$-entries.
Assume that $n\ge3$. 
Let  $[n]:=\{1,2,\ldots,n\}$.   

Denote by $K_{n,n}$ the complete bipartite graph with parts $[n]$ and a copy $[n]':=\{k'\ :\ k\in[n]\}$ of $[n]$.  

Denote by $I_n$ the identity matrix of size $n$. 
Denote by $H_n$ the square matrix of size $2n$ whose off-diagonal $2\times2$-blocks are zero matrices, and whose diagonal $2\times2$-blocks are
$\begin{pmatrix} 0 & 1 \\ 1 & 0 \end{pmatrix}$.  
For a matrix $A$ let $A_{P,Q}$ be the entry in the row of $P$ and column of $Q$.

A (simplicial) \textbf{$1$-cycle} (modulo 2) in a graph $K$ is a set $C$ of edges such that every vertex is contained in an even number of edges from $C$. 
A (simplicial) \textbf{$k$-cycle} (modulo 2) in a complex 
$K$ is a set $C$ of $k$-faces such that every $(k-1)$-face is contained in an even number of faces from $C$. 

We omit $\Z_2$-coefficients of homology groups considered below.
Denote by $H_k(K)$ the group of all $k$-cycles in a $k$-complex $K$, with the sum modulo 2 operation. 
For a $2k$-manifold $M$ denote by $H_k(M)$ the modulo 2 \emph{homology group}, and by $\cap_M:H_k(M)\times H_k(M)\to\Z_2$ the modulo 2 \emph{intersection form} (for definitions accessible to non-specialists see 
\cite[\S6, \S10]{Sk20}).

\subsection*{Results for graphs}

\talkno{Our results for graphs are Theorems \ref{t:embntfs}, \ref{t:embntf} and \ref{t:embntfg} (of which Theorem \ref{t:embntfs} is a particular case of Theorem \ref{t:embntfg}, and Theorem \ref{t:embntf} is a particular case of both Theorems \ref{t:embntfg} and \ref{t:embntfk}).} 

Let $M_m$ be the sphere with $m$ M\"obius films, 
$S_g$ the sphere with $g$ handles, and $N$ either $M_m$ or $S_g$.   
Denote 
$$\Omega_{M_m}:=I_m,\quad \beta(M_m):=m,\quad  \Omega_{S_g}:=H_g,\quad\text{and}\quad\beta(S_g):=2g.$$  

Let $A$ be a symmetric square matrix of $\binom{n}{3}$ rows (whose rows and whose columns correspond to all 3-element subsets of $[n]$).  
The matrix $A$ is \talkno{said to be} 

$\bullet$ \textbf{independent} if $A_{P,Q} = 0$ whenever $P \cap Q = \varnothing$;  

$\bullet$ \textbf{additive} if 
$$A_{F-a,Q}+A_{F-b,Q}+A_{F-c,Q}+A_{F-d,Q}=0$$ 
for any 4-element subset $F=\{a,b,c,d\}\subset[n]$ and 3-element subset $Q\subset[n]$ (here we abbreviate $\{a\}$ to $a$). 

$\bullet$ \textbf{non-trivial} if for any $5$-element subset $F\subset[n]$ and\footnote{\label{f:indep} Here and below after Theorem \ref{t:embntfs} 
\newline
$\bullet$ `for any vertex $v$ / edge $e$' is equivalent to `for some vertex $v$ / edge $e$' \cite[Lemma 20]{PT19}, \cite[Proposition 2.2 and Remark 2.3]{KS21}, \cite[Remark 4.4]{DS22};  
\newline
$\bullet$ strictly speaking, the properties of matrix $A$ should be called `$K_5$-independence',  `$K_{3,3}$-independence', etc (for the sake of readability, we shorten the notation).} 
$v\in F$ we have $S_{F,v}A=1$, where $S_{F,v}A$ is the sum of $A_{P,Q}$ over all the $3$ unordered pairs $\{P,Q\}$ of 
$3$-element subsets $P,Q\subset F$ such that $P\cap Q=\{v\}$. 
 
\begin{theorem}\label{t:embntfs}  
A $\Z_2$-embedding $K_n\to N$ exists if and only if there is a $\beta(N)\times{n\choose3}$ matrix $Y$ such that $Y^T\Omega_NY$ is an independent additive non-trivial matrix. 
\end{theorem}


\talkonly{\newpage
{\bf The result for complete bipartite graph}}

Let $A$ be a symmetric square matrix of $\binom{n}{2}^2$ rows (whose rows and whose columns correspond to all cycles of length 4 in $K_{n,n}$).
The matrix $A$ is said to be 

$\bullet$ \textbf{independent} if $A_{P,Q} = 0$ whenever $P$ and $Q$ are vertex-disjoint; 

$\bullet$ \textbf{additive} if $A_{P,Q} = A_{X,Q} + A_{Y,Q}$  whenever $P = X \oplus Y$ (as sets $P,X,Y$ of edges).  
 
$\bullet$ \textbf{non-trivial} if for any subgraph $X$ of $K_{n,n}$ isomorphic to $K_{3,3}$ and  edge $e$ of $X$ we have $S_{X,e}A=1$, where $S_{X,e}A$ is the sum of $A_{P,Q}$ over all the 2 unordered pairs $\{P,Q\}$ of 
cycles $P,Q\subset X$ of length 4 whose only common edge is $e$. 
   
\begin{theorem}\label{t:embntf}  
A $\Z_2$-embedding $K_{n,n}\to N$ exists if and only if there is a $\beta(N)\times{n\choose2}^2$ matrix $Y$ such that $Y^T\Omega_NY$ is an independent additive non-trivial matrix. 
\end{theorem}


\talkno{

\begin{theorem}\label{t:embntfg} A $\Z_2$-embedding $K\to N$ of a graph $K$ exists if and only if there is a homomorphism $y:H_1(K)\to H_1(N)$ such that 
 
(independence) $y(P)\cap_N y(Q) = 0$ whenever $P$ and $Q$ are vertex-disjoint; 
 
($K_5$-non-triviality) for any subgraph $F$ homeomorphic to $K_5$, and vertex $v$ of $F$ we have $S_{F,v}y=1$, where $S_{F,v}y$ is the sum of $y(P)\cap_N y(Q)$ over all the 3 unordered pairs $\{P,Q\}$ of cycles $P,Q\subset F$ of length\footnote{Here and in ($K_{3,3}$-non-triviality) the cycles of length 3 and 4 are not the cycles of that length in $K$, but rather they are cycles with 3 and 4, respectively, vertices of degree more than 2 in $F$ and in $X$.} 
3 such that $v$ is the only common vertex of $P$ and $Q$; and
 
($K_{3,3}$-non-triviality) for any subgraph $X$ homeomorphic to $K_{3,3}$, and edge $e$ of $X$ we have $S_{X,e}y=1$, where $S_{X,e}y$ is the sum of $y(P)\cap_N y(Q)$ over all the 2 unordered pairs $\{P,Q\}$ of cycles $P,Q\subset X$ of length 4 such that $e$ is the only common edge of $P$ and $Q$. 
\end{theorem}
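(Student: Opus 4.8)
The plan is to reduce the general graph statement to the two model cases $K_5$ and $K_{3,3}$ already packaged in Theorems \ref{t:embntfs} and \ref{t:embntf}, via the Kuratowski-type structure of planarity obstructions. First I would handle the ``only if'' direction, which (per the motivation section) is the Dzhenzher--Skopenkov necessary condition: given a $\Z_2$-embedding $f:K\to N$, define $y:H_1(K)\to H_1(N)$ by sending a simplicial $1$-cycle $C$ to the $\Z_2$-homology class of $f(C)$; one checks this is well defined on cycles and additive. Independence is immediate from the definition of $\Z_2$-embedding: vertex-disjoint cycles $P,Q$ have vertex-disjoint faces, so each pairwise intersection number $|f\sigma\cap f\tau|$ is even, and summing gives $y(P)\cap_N y(Q)=0$. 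The $K_5$- and $K_{3,3}$-non-triviality follow from the classical fact that $K_5$ and $K_{3,3}$ are not planar, hence not $\Z_2$-embeddable in the plane, transported along the subgraph inclusion and the van Kampen--Flores/Hanani--Tutte style count of self-intersections on each Kuratowski subgraph --- this is exactly where the numbers $3$ (for $K_5$) and $2$ (for $K_{3,3}$), and the sum $S_{F,v}y$, $S_{X,e}y$, come from.

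For the ``if'' direction --- the new content --- suppose $y:H_1(K)\to H_1(N)$ satisfies independence and both non-triviality conditions. The strategy is to build a general position map $f:K\to N$ realizing $y$ on homology, i.e.\ with $f(C)\simeq y(C)$ for every $1$-cycle $C$, and with prescribed parities of intersection numbers of vertex-disjoint edges. Start from an arbitrary general position map $g:K\to N$; it induces some homomorphism $y_0:H_1(K)\to H_1(N)$ and some ``intersection cocycle'' recording $|g\sigma\cap g\tau|\bmod 2$ for vertex-disjoint edges. The obstruction to modifying $g$ by finger moves / Whitney-type moves so as to change $y_0$ into $y$ and to kill the intersection numbers on vertex-disjoint edges lives in a $\Z_2$-cohomology group of the deleted product $\widetilde K$ of $K$; the relevant computation of generators of $H_*(\widetilde K)$ is precisely Theorem \ref{l:h2sym} and Lemma \ref{r:dedu}. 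Concretely: the first Betti-type data (the homomorphism to $H_1(N)$) can always be adjusted by choosing where edges map, and the residual obstruction --- a class determined by $y$ on the deleted product --- is detected by evaluation against the homology generators of $\widetilde K$, which by the cited homology computation are supported exactly on (subdivided) copies of $K_5$ and $K_{3,3}$ inside $K$. On each such copy the evaluation equals $S_{F,v}y$ or $S_{X,e}y$ up to the independence-forced vanishing on vertex-disjoint pairs, so the two non-triviality hypotheses say exactly that this obstruction vanishes, and $g$ can be finger-moved to the desired $\Z_2$-embedding $f$.

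The main obstacle, and the technical heart, is the homology calculation for the deleted product of an arbitrary graph and the verification that its relevant homology is generated by the classes carried by Kuratowski subgraphs --- so that checking the obstruction reduces to a $K_5$-$K_{3,3}$ ``local'' check. This is what Theorem \ref{l:h2sym} and Lemma \ref{r:dedu} are for, so in the proof of Theorem \ref{t:embntfg} I would quote them and concentrate on: (i) the dictionary between a general position map $K\to N$ and a cochain on $\widetilde K$ with values in $\Z_2$ (the van Kampen obstruction formalism with a twist by $\cap_N$ and the group $H_1(N)$); (ii) checking that finger moves realize exactly the coboundaries, so that $\Z_2$-embeddability is equivalent to vanishing of the cohomology class; and (iii) matching the pairing of that class with the explicit homology generators against the quantities $S_{F,v}y$ and $S_{X,e}y$ appearing in the statement. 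Steps (i)--(ii) are the standard obstruction-theoretic machinery for maps of complexes into manifolds (in the $2k$-manifold, not just Euclidean, setting, which is why $H_1(N)$ and $\cap_N$ enter); step (iii) is a direct though careful bookkeeping exercise once the generators from Theorem \ref{l:h2sym} are in hand.
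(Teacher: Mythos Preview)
Your plan is essentially the paper's own argument. The `only if' direction is handled exactly as you describe (and the paper simply cites it as known). For the `if' direction the paper does precisely what you outline in steps (i)--(iii): it invokes the homological criterion (Theorem~\ref{t:homembcri}, which packages your ``finger moves realize coboundaries'' step cleanly as an if-and-only-if statement), lifts the homomorphism $y$ to an edge-level map via Lemma~\ref{l:evalc}.a, and then checks the equality $v(C)=y^2(C,\cap_N)$ only on the generators of the symmetric $2$-cycle group of $K^{\times2}_\Delta$ supplied by Lemma~\ref{r:dedu}. On the symmetrized tori $P\times Q+Q\times P$ both sides vanish (independence vs.\ the Parity Lemma); on the economic deleted product of a $K_5$- or $K_{3,3}$-subgraph both sides equal $1$ (non-triviality vs.\ van Kampen's theorem that $v=1$ there).

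Two small corrections of framing. First, your opening sentence is backwards: Theorems~\ref{t:embntfs} and~\ref{t:embntf} are \emph{special cases} of Theorem~\ref{t:embntfg}, not ingredients in its proof; you do not reduce to them, you run the same machine directly on $K$. Second, be careful with ``the obstruction vanishes'': on the Kuratowski generators the van Kampen number is $1$, not $0$, and non-triviality says $y^2=1$ as well --- it is the \emph{difference} $v(C)-y^2(C,\cap_N)$ that vanishes. Your later paragraphs get this right, but the phrasing earlier could mislead.
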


\begin{remark}\label{r:matr}  
(a) (on the novelty and idea of proof) The `only if' part of Theorem \ref{t:embntfg} is essentially known, and is analogous to \cite[Remark 1.6]{DS22}, which in turn is analogous to \cite{FK19, Bi21}. 
Proof of the `if' part is analogous to the proof of the `if' part of Theorem \ref{t:embntfk} for $k=1$ in \S\ref{s:meta}, except that reference to Theorem \ref{l:h2sym}.b is replaced by reference to Theorem \ref{r:dedu}. 
See more comments on the proof in Remark \ref{r:othcom}.c. 

(b) \jonly{For a matrix form of Theorem \ref{t:embntfg} (and so of Theorems \ref{t:embntfs},  \ref{t:embntf}) see \cite[Remark 1.4.b]{SS23}.}
\aronly{In matrix form, the condition of Theorem \ref{t:embntfg} means that there are a set of $r$ generators of $H_1(K)$, and a $\beta(N)\times r$ matrix $Y$ such that $A:=Y^T\Omega_NY$ is independent additive non-trivial (as defined below). 

Let $A$ be a symmetric $r\times r$ matrix whose rows and whose columns correspond to some generators of $H_1(K)$. 
The matrix $A$ is said to be 

$\bullet$ \emph{independent}, if $A_{P,Q} = 0$ whenever $P$ and $Q$ are vertex-disjoint;     

$\bullet$ \emph{additive}, if $A_{P,Q} = A_{P_1,Q} + \ldots + A_{P_t,Q}$  whenever $P = P_1 \oplus \ldots\oplus P_t$ (as sets of edges). 

If $X,Y\in H_1(K)$, then express them as a sum of the generators, $X=X_1+\ldots+X_p$, $Y=Y_1+\ldots+Y_q$, and define $A_{X,Y}:=\sum_{i,j}A_{X_i,Y_j}$. 
The matrix $A$ is said to be \emph{$K_5$-non-trivial} (\emph{$K_{3,3}$-non-trivial}), if the $K_5$-non-triviality ($K_{3,3}$-non-triviality) of Theorem \ref{t:embntfg} holds replacing $y(P)\cap_N y(Q)$ by $A_{P,Q}$.} 
\end{remark}

}

\talkonly{\newpage}
\subsection*{The main result for higher dimensions}

We consider piecewise-linear (PL) manifolds as defined in \cite{RS72}. 
Let $M$ be a closed $2k$-manifold. 
Denote $\beta(M):=\dim H_k(M)$. 
Denote $\Omega_M:=I_{\beta(M)}$ if there is $x\in H_k(M)$ such that $x\cap_Mx=1$, and  $\Omega_M:=H_{\beta(M)/2}$ otherwise, when $\beta(M)$ is known to be even ($\Omega_M$ is the matrix of $\cap_M$ in some basis).  

Denote $D^j:=[0,1]^j$ and $S^{j-1}:=\partial D^j$. 
A complex $M$ is called \emph{$m$-connected} if for any $j=0,1,\ldots,m$ every continuous map $f:S^j\to M$ extends  continuously over $D^{j+1}$.

Let $[n]^{*k+1}$ be the $k$-complex 

$\bullet$ whose vertex set is the $(k+1)\times n$ grid $\{0,\ldots,k\}\times[n]$, 

$\bullet$ in which every $k+1$ vertices from different rows (i.e. with different first coordinates) span a $k$-simplex. 

For $k=1$ this is $K_{n,n}$. 
For geometric interpretation see \cite[Proposition~4.2.4]{Ma03}.
A \textbf{$k$-octahedron} is the set of $k$-faces of a subcomplex (of $[n]^{*k+1}$) isomorphic to $[2]^{*k+1} \cong S^k$.
For $2$-element subsets $P_0,\ldots,P_k\subset[n]$ such a subcomplex is defined by the set $0 \times P_0 \sqcup \ldots \sqcup k \times P_k$ of its vertices. 
 
\talkonly{\newpage}
  
Let $A$ be a symmetric square matrix of $\binom{n}{2}^{k+1}$ rows (whose rows and whose columns correspond to all $k$-octahedra).
The matrix $A$ is said to be 

$\bullet$ \textbf{independent} if $A_{P,Q}=0$ whenever $P$ and $Q$ are vertex-disjoint; 
 
$\bullet$ \textbf{additive} if $A_{P,Q} = A_{X,Q} + A_{Y,Q}$  whenever $P = X \oplus Y$  (as sets $P,X,Y$ of $k$-faces);   

$\bullet$ \textbf{non-trivial} if for any subcomplex $X$ of $[n]^{*k+1}$ isomorphic to $[3]^{*k+1}$ and $k$-face $e$ of $X$ we have $S_{X,e}A=1$, where by $S_{X,e}A$ we denote the sum of $A_{P,Q}$ over all $2^k$ unordered pairs $\{P,Q\}$ of $k$-octahedra $P,Q\subset X$ whose only common $k$-face is $e$. 
  
\begin{theorem}\label{t:embntfk} Let $M$ be a closed $(k-1)$-connected $2k$-manifold.
A $\Z_2$-embedding $[n]^{*k+1}\to M$ exists if and only if there is a $\beta(M)\times{n\choose2}^{k+1}$ matrix $Y$ such that $Y^T\Omega_MY$ is an independent additive non-trivial matrix.
\end{theorem}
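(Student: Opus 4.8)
The plan is to establish both directions by translating the geometric condition of $\Z_2$-embeddability into an algebraic statement about the intersection form on $H_k(M)$, using the homology of the deleted product of the joinpower $[n]^{*k+1}$ as the bridge. For the `only if' direction (the essentially known part, the Dzhenzher--Skopenkov necessary condition): given a $\Z_2$-embedding $f:[n]^{*k+1}\to M$, I would define $y:H_k([n]^{*k+1})\to H_k(M)$ by sending a $k$-cycle $z$ to the class of $f(z)$, which is well-defined since $f$ is general position and $M$ is $(k-1)$-connected (so $H_k(M)$ carries all the relevant intersection data). Taking $A:=$ the Gram matrix of $\cap_M$ restricted to the images of the $k$-octahedra (equivalently $A=Y^T\Omega_M Y$ for $Y$ the matrix of $y$ in suitable bases), independence follows because vertex-disjoint $k$-octahedra meet in vertex-disjoint faces and $f$ is a $\Z_2$-embedding; additivity is linearity of $z\mapsto [f(z)]$ combined with bilinearity of $\cap_M$; non-triviality is the computation that inside a copy of $[3]^{*k+1}$, summing the self-intersection-type contributions over the $2^k$ complementary pairs sharing a fixed $k$-face $e$ counts, modulo 2, the local obstruction, which equals $1$ — this is where one invokes the footnote \ref{f:indep}-type lemma (`for any' $\Leftrightarrow$ `for some') and the standard van Kampen--Shapiro--Wu style argument.

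For the `if' direction — the main new content — I would argue as in \S\ref{s:meta} for $k=1$ but substituting Theorem \ref{l:h2sym}.b for Lemma \ref{r:dedu}. Suppose $Y$ is given with $A=Y^T\Omega_M Y$ independent, additive, non-trivial. The matrix $Y$ (together with the identification of $\Omega_M$ with $\cap_M$ in a basis) determines a homomorphism $y:H_k([n]^{*k+1})\to H_k(M)$ with $y(P)\cap_M y(Q)=A_{P,Q}$. The strategy is to realize $y$ by an actual general position map $f:[n]^{*k+1}\to M$ whose only `bad' intersections are forced, and then to kill those using the freedom in the non-triviality/additivity conditions. Concretely: first map the $(k-1)$-skeleton into $M$ by general position (possible since $M$ is $(k-1)$-connected, in fact since $2k\ge k+(k-1)+1$ one can even make it an embedding); then extend over the $k$-faces one at a time, and measure the resulting intersection numbers $|f\sigma\cap f\tau|$ for vertex-disjoint $\sigma,\tau$ by a $\Z_2$-valued cocycle on the deleted product $([n]^{*k+1})^{*2}_\Delta$. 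The van Kampen–type obstruction to being a $\Z_2$-embedding lives in $H^{2k}$ of this deleted product (equivariantly, in $H^{2k}$ of the symmetric deleted product), and Theorem \ref{l:h2sym} identifies its homology with an explicit space of generators indexed exactly by $k$-octahedra (with the relations `independence' and `additivity' built in) and by the $[3]^{*k+1}$-type relations governing `non-triviality'. So the obstruction vanishes — i.e.\ $f$ can be modified (by finger moves / Whitney-type tricks along the $(k-1)$-skeleton, adjusting $f$ on $k$-faces by pushing classes from $H_k(M)$) to an honest $\Z_2$-embedding — precisely when a cocycle with the prescribed values on octahedra and the prescribed behavior inside $[3]$-subcomplexes exists, which is what independence + additivity + non-triviality of $A=Y^T\Omega_M Y$ provides.

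The key steps, in order, are: (1) set up the deleted product $([n]^{*k+1})^{*2}_\Delta$ and recall (from Theorem \ref{l:h2sym}) the explicit generators of its $2k$-th (co)homology, organized so that octahedra give a spanning set and the $[2]^{*k+1}$- and $[3]^{*k+1}$-subcomplexes give the relations; (2) prove the `only if' direction by the Gram-matrix translation above; (3) for `if', construct the initial general position map realizing $y$ on $H_k$, identify its van Kampen obstruction as a cochain on the deleted product, and match the three algebraic conditions on $A$ to the conditions needed for this obstruction class to vanish; (4) perform the geometric modification (the standard argument that a vanishing obstruction cocycle is realized by local pushes) to produce the $\Z_2$-embedding. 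The main obstacle I expect is step (3): correctly matching the combinatorics of Theorem \ref{l:h2sym}'s generating set — in particular handling the $2^k$ complementary pairs inside $[3]^{*k+1}$ and verifying that `additivity' is exactly the relation that makes the assignment $P\mapsto A_{P,\cdot}$ descend from octahedra to a cocycle — requires care with signs-mod-2 and with the equivariance, and is the place where the higher-dimensional case genuinely differs from (and is harder than) $k=1$, where Lemma \ref{r:dedu} does this bookkeeping for us.
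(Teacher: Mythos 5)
Your plan follows the same route as the paper's: invoke E.~Kogan's homological criterion (Theorem~\ref{t:homembcri}) to reduce the `if' direction to checking $v(C)=y^2(C,\cap_M)$ on symmetric $2k$-cycles in the deleted product, then use Theorem~\ref{l:h2sym}.b to reduce to two families of generators, matching independence to the symmetrized tori (where the Parity Lemma gives $v=0$) and non-triviality to $([3]^{*k+1})^{\times2}_\Delta$ (where van Kampen's theorem gives $v=1$). That structure is correct.

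There is, however, a genuine gap in the step where you assert that ``the matrix $Y$ \ldots determines a homomorphism $y:H_k([n]^{*k+1})\to H_k(M)$ with $y(P)\cap_M y(Q)=A_{P,Q}$.'' The $k$-octahedra span $H_k([n]^{*k+1})$ but are \emph{not} a basis: they satisfy the relations of Lemma~\ref{l:genegn}.b, and the assignment $P_a\mapsto Y_a$ (the $a$-th column of $Y$) need not respect those relations, so it need not define a homomorphism at all --- the paper flags exactly this in a footnote to Lemma~\ref{l:homom}. Additivity of $A=Y^T\Omega_M Y$ gives something strictly weaker: if $\sum_{a\in S}P_a=0$, it only forces $\sum_{a\in S}Y_a$ to be $\cap_M$-orthogonal to every $Y_b$, not to vanish. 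The fix is Lemma~\ref{l:homom}: pick a basis among the octahedra, define $\psi$ on that basis by $\psi(P_j)=Y_j$, and then show (using bilinearity of $\cap_M$ and the orthogonality just noted) that the resulting Gram matrix $\psi P_a\cap_M\psi P_b$ still equals $A_{P_a,P_b}$ on \emph{all} octahedra, even though $\psi P_a\ne Y_a$ in general. Your closing remark that additivity ``makes the assignment $P\mapsto A_{P,\cdot}$ descend to a cocycle'' does not capture this: the issue is not descent of a cochain but existence of a homomorphism with prescribed Gram matrix, and this is precisely where additivity enters the proof. As a secondary point, your step (4) proposes to rebuild the geometric realization (finger moves / Whitney pushes) from scratch; the paper avoids all of this by treating Theorem~\ref{t:homembcri} as a black box, which you may as well do too.
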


\talkonly{\newpage}
\begin{remark}\label{r:othcom} 
(a) (on the novelty and idea of proof) 
The `only if' part of Theorem \ref{t:embntfk} is \cite[Remark 1.9b]{DS22}. 
Our main achievement is the `if' part (\S\ref{s:meta}). 
For this we use a homological $\Z_2$-embeddability criterion (Theorem \ref{t:homembcri} by E. Kogan; for completeness, we present in \S\ref{s:meta} a standard deduction from known results). 
Our main idea is that the independence and non-triviality of the matrix $Y^T\Omega_MY$ correspond to generators of the homology group of a certain configuration space (Theorem \ref{l:h2sym}).  

(b) A version of Theorem \ref{t:embntfk} for the $k$-skeleton of $n$-simplex instead of $[n]^{*k+1}$ is stated analogously (cf. Theorem \ref{t:embntfs}). 
Such a version is presumably false for $k>1$ because the relevant homology group of  configuration space is complicated.
So Theorem \ref{t:embntfk} is not as trivial a corollary of Theorem \ref{t:homembcri} as it may seem. 
 
(c) Theorem \ref{t:embntfk} generalizes to joins of finite sets of different sizes $n_0,\ldots,n_k\ge3$:   
\emph{a $\Z_2$-embedding $[n_0]*\ldots*[n_k]\to M$ exists if and only if there is a 
$\beta(M)\times{n_0\choose2}\ldots{n_k\choose2}$ matrix $Y$ such that $Y^T\Omega_MY$ is an independent additive non-trivial matrix}.
The proof of Theorem \ref{t:embntfk} generalizes trivially to a proof of the generalization, 
see Remarks \ref{dif2}, \ref{dif3}, \ref{dif4}.
In those Remarks, the changes of the proofs that repeat changes of the statements are omitted.
\end{remark}

\talkonly{\newpage}

\begin{conjecture}\label{p:open} (a) (asymptotic mod 2 K\"uhnel conjecture)
\emph{For any integer $k>0$ there is $c_k>0$ such that if $[n]^{*k+1}$ has a $\Z_2$-embedding to a closed $2k$-manifold $M$, then $\beta(M)\ge c_k n^{k+1}$.}

By Theorem \ref{t:embntfk}, this is equivalent to the following algebraic conjecture: 

\emph{For any integer $k>0$ there is $c_k>0$ such that if there is a $d\times{n\choose2}^{k+1}$ matrix $Y$ such that either $Y^TY$ or $Y^TH_{d/2}Y$ is independent additive non-trivial, then $d\ge c_k n^{k+1}$.}

Both conjectures are proved for $k=1$ \cite{FK19}, and for any $k$ replacing $n^{k+1}$ by $n^2$ \cite{DS22}.  
 
\talkonly{\medskip}

(b) (analogue of Theorem \ref{t:embntfk} for embeddings) 
\emph{Assume that $k\ge3$ and $M$ is a closed $(k-1)$-connected $2k$-manifold.
An embedding $[n]^{*k+1}\to M$ exists if and only if there is a $\beta(M)\times{n\choose2}^{k+1}$ matrix $Y$ with integer entries such that $Y^T\Omega_{M,\Z}Y$ is an independent additive non-trivial matrix.}

Here $\Omega_{M,\Z}$ is the matrix of the intersection form of $M$ in some basis of $H_k(M;\Z)\cong\Z^{\beta(M)}$\talkno{ (the isomorphism holds since $M$ is $(k-1)$-connected)}. 

\talkno{
Let $A$ be a $(-1)^k$-symmetric square matrix with integer entries, of $\binom{n}{2}^{k+1}$ rows. 
Independence is defined analogously to the mod 2 case.  
The matrix $A$ is said to be 

$\bullet$ \emph{additive} if $A_{P,Q} = A_{X,Q} + A_{Y,Q}$  whenever $P = X + Y$  (as integer $k$-cycles);   

$\bullet$ \emph{non-trivial} if for any subcomplex $X$ of $[n]^{*k+1}$ isomorphic to $[3]^{*k+1}$ and $k$-face $e$ of $X$ the sum $S_{X,e}A$ is odd. 

The `only if' part of this conjecture is proved analogously to \cite[Theorem 1.8 and Remark 1.9b]{DS22}. 
}
\end{conjecture}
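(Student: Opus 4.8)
\emph{Part (a).} The stated equivalence is immediate from Theorem \ref{t:embntfk}, using in addition the standard fact that for every $d$ (even, in the skew case) both $I_d$ and $H_{d/2}$ occur as $\Omega_M$ for some closed $2k$-manifold $M$ with $\beta(M)=d$. So it suffices to prove: \emph{there is $c_k>0$ such that every independent additive non-trivial $\Z_2$-matrix $A$ of size $\binom{n}{2}^{k+1}$ has $\rk A\ge c_k n^{k+1}$}, since $\rk(Y^TY)\le\rk Y\le d$, and likewise with $H_{d/2}$ in place of $I_d$. Here is the plan. Using additivity, reinterpret $A$ as a symmetric bilinear form on $H_k([n]^{*k+1})\cong\Z_2^{(n-1)^{k+1}}$ (the $k$-octahedra generate this group); the target bound then says that the rank of this form is at least a fixed positive fraction of $\dim H_k([n]^{*k+1})$, i.e. $A$ cannot be too degenerate. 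Now invoke Theorem \ref{l:h2sym}: the independence and non-triviality conditions on $A$ correspond (Remark \ref{r:othcom}.a) to generators of the homology of the deleted product $\widetilde{[n]^{*k+1}}$, and there is one non-triviality relation $S_{X,e}A=1$ for each subcomplex $X\cong[3]^{*k+1}$ and each $k$-face $e\subset X$; a form of small rank would be forced to evaluate compatibly on this enormous, a priori conflicting, family of generators.

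\emph{The main obstacle in (a) is decoupling.} Partitioning $[n]$ into $\lfloor n/3\rfloor$ triples yields $\lfloor n/3\rfloor^{k+1}$ subcomplexes $\cong[3]^{*k+1}$, one for each $(k+1)$-tuple of triples (the $i$-th triple supplying line $i$); non-triviality forces the restriction of $A$ to each such subcomplex to be nonzero, hence of rank $\ge1$. If two of the tuples differed in every coordinate, the corresponding $k$-octahedra would be vertex-disjoint and $A$ would vanish between them; but tuples that agree in some coordinate give octahedra that may share vertices, so $A$ is \emph{not} block-diagonal across the partition, and the off-block entries could conceivably collapse the rank. Ruling this out seems to need genuine control over the module structure of the homology in Theorem \ref{l:h2sym}, which for $[n]^{*k+1}$ is intricate (Remark \ref{r:othcom}.b) --- which is presumably why only the weaker $n^2$ bound is known for $k>1$. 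I would try two attacks: (i) induction on $k$ through the join $[n]^{*k+1}=[n]*[n]^{*k}$, aiming for a multiplicative rank estimate $\rk A\gtrsim n\cdot n^{k}$, the difficulty being that a "slice" of $A$ need not be non-trivial on the nose; and (ii) a random-restriction argument, showing that for a random sub-joinpower a constant fraction of the non-triviality relations stay linearly independent over any fixed low-rank form. The cases $k=1$ (\cite{FK19}) and the $n^2$ bound for all $k$ (\cite{DS22}) are the sanity checks.

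\emph{Part (b).} The "only if" part I would prove as the $\Z$-coefficient version of \cite[Theorem 1.8, Remark 1.9b]{DS22}. For the "if" part I would rerun the proof of the "if" part of Theorem \ref{t:embntfk} in \S\ref{s:meta} with two substitutions. First, replace the mod $2$ homological criterion (Theorem \ref{t:homembcri}) by the metastable PL embeddability criterion for a manifold target (\cite{Sk24}): since $k\ge3$ we have $2\cdot2k\ge3(k+1)$, so $[n]^{*k+1}$ embeds into the closed $2k$-manifold $M$ if and only if there is a $\Z_2$-equivariant map of deleted products $\widetilde{[n]^{*k+1}}\to\widetilde M$; the obstruction to such a map is a single equivariant cohomology class, and $(k-1)$-connectedness of $M$ makes $\widetilde M$ $(k-1)$-connected and identifies the relevant twisted coefficients with $\Z$ acted on by the sign of the transposition of $H_k(M)\otimes H_k(M)$. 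Hence the obstruction is governed by the pairing $y\otimes y\mapsto y\cap_M y$, i.e. by the $(-1)^k$-symmetric integer matrix $Y^T\Omega_{M,\Z}Y$. Second, replace Theorem \ref{l:h2sym}.b by its integral equivariant analogue: a list of generators, and enough relations, of the integral equivariant homology of $\widetilde{[n]^{*k+1}}$ in degree $2k$, tracking $2$-torsion and the $\Z_2$-action; then independence, additivity, and non-triviality of $Y^T\Omega_{M,\Z}Y$ (with "$S_{X,e}$ odd" replacing "$S_{X,e}=1$") correspond to these generators as in the mod $2$ case.

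\emph{The main obstacle in (b) is this integral version of Theorem \ref{l:h2sym}.} Over $\Z$ one must check that no extra obstructions survive beyond the matrix condition, that the orientation and sign conventions make the form symmetric for $k$ even and skew-symmetric for $k$ odd, and that the mod $2$ generators lift to integral ones without new relations (no $2$-torsion class carrying a hidden constraint). This is exactly where the hypotheses enter: $k\ge3$ to have a metastable embeddability criterion at all, and $(k-1)$-connectedness to keep $\widetilde M$, hence the obstruction group, tractable.
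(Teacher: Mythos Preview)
This statement is a \emph{Conjecture}, not a theorem: the paper does not prove either part. What the paper does assert is (i) in part~(a), the equivalence between the geometric and algebraic formulations, which it attributes to Theorem~\ref{t:embntfk}; and (ii) in part~(b), that the `only if' direction is provable analogously to \cite[Theorem~1.8 and Remark~1.9b]{DS22}. Everything else is explicitly left open, with the remark that the $k=1$ case and the weaker $n^2$ bound are known from \cite{FK19} and \cite{DS22}.

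Your proposal is not a proof but a research plan, and you essentially acknowledge this by naming the ``main obstacles'' in both parts. On the parts the paper does settle: your treatment of the equivalence in~(a) is broadly in line with the paper's one-line appeal to Theorem~\ref{t:embntfk}, though note a subtlety you glossed over --- Theorem~\ref{t:embntfk} is stated for $(k-1)$-connected $M$, while Conjecture~\ref{p:open}(a) is about arbitrary closed $2k$-manifolds, so the equivalence is not quite ``immediate'' without saying a word about why the necessary condition (the `only if' direction) holds without connectivity (it does, as the Dzhenzher--Skopenkov argument \cite{DS22} does not use it). For~(b), your `only if' sketch matches what the paper claims.

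For the open parts, your outline is reasonable as a program but contains no argument that closes the gap. In~(a), the ``decoupling'' obstacle you identify is exactly the point; neither of your two suggested attacks (induction via the join, or random restriction) comes with a mechanism to control the off-block interactions, and you do not claim one. In~(b), the integral analogue of Theorem~\ref{l:h2sym} you invoke is not in the paper and is not known; the paper explicitly flags (Remark~\ref{r:othcom}.b) that even over $\Z_2$ the configuration-space homology is delicate for related complexes. So there is no proof here to compare to the paper's --- because the paper has none either.
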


\talkonly{\newpage}
\section{Generators in homology of the complex}\label{s:genpow}

Of this section, only Lemmas \ref{l:genegn}, \ref{l:tkn4} and \ref{l:bas} 
are used later. 

\talkonly{A \textbf{$k$-cycle} in a complex (e.g. in a graph) $K$ is a set $C$ of $k$-faces such that every $(k-1)$-face is contained in an even number of faces from $C$.} 
 
\begin{lemma}\label{l:genegn} 
(a) Any $k$-cycle in $[n]^{*k+1}$ is a sum of some $k$-octahedra. 
 
(b) Any linear relation on $k$-octahedra is a sum of some relations 
$$K*\{u,v\}+K*\{v,w\}+K*\{w,u\}=0$$
for pairwise distinct $u,v,w\in[n]$, and $(k-1)$-octahedra $K$, and relations obtained from that by permutations of `join coordinates'\footnote{By $*$ we denote a join. 
Recall that a $(k-1)$-octahedron is the join $K=K_0*\ldots*K_{k-1}$ of some 2-element subsets $K_0,\ldots, K_{k-1}\subset[n]$. 
The `join coordinates' of $K*\{a,b\}$ are $K_0,\ldots,K_{k-1},\{a,b\}$. 
We take the same permutation of `join coordinates' of $K*\{u,v\}$, of $K*\{v,w\}$, and of $K*\{w,u\}$.}\footnote{Rigorously, 
a \emph{linear relation} is a set of $k$-octahedra such that every $k$-face of $[n]^{*k+1}$
is contained in an even number of $k$-octahedra from this set.}. 
\end{lemma}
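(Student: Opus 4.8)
The plan is to work directly with the combinatorial chain complex of $[n]^{*k+1}$ and exploit the product structure coming from the join decomposition $[n]^{*k+1} = [n]*\cdots*[n]$. For part (a), I would first recall that $[n]^{*k+1}$ is the union of all $k$-simplices $\{v_1,\ldots,v_{k+1}\}$ with $v_i$ in the $i$-th copy of $[n]$, so a $k$-octahedron is exactly the fundamental cycle of a subcomplex $[2]^{*k+1}\cong S^k$ determined by a choice of a $2$-element subset $P_i\subset[n]$ in each join coordinate. The key algebraic observation is that the simplicial chain complex of a join is the tensor product of the chain complexes of the factors, suitably shifted; hence $H_k([n]^{*k+1})$ is generated by tensor products of $1$-cycles in each copy of the (discrete) set $[n]$ regarded as a $0$-complex — but the relevant object is really the reduced chain complex, where $\t C_{-1}([n])$ is $\Z_2$ and $\t C_0([n])=\Z_2^n$ with boundary the augmentation. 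A $1$-cycle of this two-term complex in one coordinate is precisely a $2$-element subset (a difference of two points), so a product of such across the $k+1$ coordinates is exactly a $k$-octahedron. I would make this precise by an explicit induction on $k$: given a $k$-cycle $C$, fix one join coordinate, say the last, group the $k$-faces of $C$ by which vertex $v\in[n]$ of that coordinate they contain, obtaining $C=\sum_{v}\{v\}*C_v$ where each $C_v$ is a chain of $(k-1)$-faces in $[n]^{*k}$; the cycle condition on $C$ forces each $C_v$ to be a $(k-1)$-cycle and forces $\sum_v C_v$ (with multiplicity, i.e. mod $2$ the symmetric difference) to vanish, so only an even number of the $C_v$ are nonzero and they can be paired. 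Applying the inductive hypothesis to each $C_v$ and then pairing up the vertices $v$ writes $C$ as a sum of terms $\{u,v\}*(k-1\text{-octahedron})$, i.e. $k$-octahedra.

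For part (b), the statement is that the module of linear relations among $k$-octahedra (the kernel of the surjection from the free $\Z_2$-module on $k$-octahedra onto $H_k([n]^{*k+1})$) is generated by the "triangle relations" $K*\{u,v\}+K*\{v,w\}+K*\{w,u\}=0$ and their images under permuting the join coordinates. I would again argue by the tensor-product structure. In a single copy of $[n]$, the $1$-cycles of the reduced two-term complex form the kernel of the augmentation $\Z_2^n\to\Z_2$, which is spanned by the $2$-element subsets $\{u,v\}$; the relations among these spanning elements are generated by $\{u,v\}+\{v,w\}+\{w,u\}=0$ (equivalently, $H_1$ of a point with the "difference" generators has this presentation, since the kernel of the augmentation on $\Z_2^n$ is $(n-1)$-dimensional with the $2$-subsets as an overdetermined spanning set whose relation module is exactly the span of these $3$-term relations). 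Passing to the $(k+1)$-fold tensor product, a general relation among product generators is a $\Z_2$-combination of: a relation in one coordinate tensored with fixed choices in the others — these are exactly the triangle relations permuted over join coordinates — together with consequences of bilinearity, which here are vacuous since each factor module is already presented by its spanning set. Making the "tensor product of presentations" step rigorous is the crux: I would set it up as a two-step filtration (first resolve the last coordinate, then the rest) and chase the resulting short exact sequences, or equivalently verify directly that any relation $\sum_\alpha c_\alpha\,O_\alpha=0$ among $k$-octahedra can be reduced, using triangle relations to normalize each coordinate's $2$-subset to lie in a fixed spanning tree/ordering, to a relation among a linearly independent set of $k$-octahedra, which must then be trivial.

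I expect the main obstacle to be part (b), specifically the bookkeeping in the "tensor product of finitely presented modules" argument: one must be careful that no extra relations appear beyond the coordinatewise triangle relations, and that the induction on $k$ correctly tracks how a relation supported on many vertices in a given coordinate decomposes. Concretely, the delicate point is that after using triangle relations to rewrite every $k$-octahedron $P_1*\cdots*P_{k+1}$ so that each $P_i$ uses a fixed "base vertex" (say $P_i=\{1,j_i\}$ with $j_i\neq 1$), the resulting octahedra are linearly independent in $H_k$, so any relation among the original octahedra becomes, modulo triangle relations, a relation among these independent ones and hence is $0$; proving that independence cleanly is where the join/tensor structure must be invoked, and it is the step I would write out in full detail. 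Part (a) should be routine by the induction sketched above.
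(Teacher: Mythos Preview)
Your plan is essentially the paper's own argument, just phrased in join/tensor language rather than the paper's dual ``rook cycle'' picture in $[n]^{k+1}$: in both cases the key move is to normalize every $k$-octahedron so that each join coordinate contains a fixed base vertex (you use $1$, the paper uses $n$), after which the normalized octahedra $\{n,a_1\}*\cdots*\{n,a_{k+1}\}$ are linearly independent because each one is the unique octahedron in the family containing the face $a_1*\cdots*a_{k+1}$. The paper carries this out directly (all coordinates at once, no induction) via the bijection between $k$-faces and points of $[n]^{k+1}$, but the content is the same.

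One genuine slip in your sketch of (a): from $\sum_v C_v=0$ you \emph{cannot} conclude that only an even number of the $C_v$ are nonzero, so ``pairing up the vertices'' as stated does not work (three nonzero cycles can sum to zero). The correct step is exactly the base-vertex trick you use in (b): write $C_n=\sum_{v\ne n}C_v$, whence
\[
C=\sum_v \{v\}*C_v=\sum_{v\ne n}\bigl(\{v\}*C_v+\{n\}*C_v\bigr)=\sum_{v\ne n}\{n,v\}*C_v,
\]
and now apply the inductive hypothesis to each $C_v$. With this fix your argument for (a) goes through; your outline for (b) is already correct.
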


{\bf Comment.}
For $k=1$ this lemma states that 

(a) any $1$-cycle in $K_{n,n}$ is a sum of some cycles of length $4$; 
 
(b) any linear relation on such cycles is a sum of some relations 
$au'bv'+au'bw'+av'bw'=0$ and symmetric to them relations $au'bv'+au'cv'+bu'cv'=0$.  

Of these, (a) is clear \talkno{(by Statement \ref{chl})} and is known. 

\medskip 
Lemma \ref{l:genegn} holds by the following `dual' restatement, Lemma \ref{l:rook}. 
We use the following analogue of the duality between the faces of the regular octahedron and the vertices of the cube. 
 
All $k$-faces of the complex~$[n]^{*k+1}$ are in 1--1 correspondence (duality) with vertices of the $(k+1)$-cube~$[n]^{k+1}$. 
A \textbf{parallelepiped} is a subset $P_0\times\ldots\times P_k\subset [n]^{k+1}$, where $P_i$ are 2-element subsets of $[n]$. 
This is dual to a $k$-octahedron.   

A~\emph{coordinate line} is a subset of~$[n]^{k+1}$ given by fixing all coordinates except one. 
Two $k$-faces of~$[n]^{*k+1}$ have a common face if and only if the two corresponding vertices of~$[n]^{k+1}$ belong to one coordinate line. 
A \textbf{rook cycle} is a~subset of~$[n]^{k+1}$ containing an even number of vertices from every coordinate line. 
This is dual to a $k$-cycle in~$[n]^{*k+1}$.  
The property of being a rook cycle is preserved under modulo~$2$ summation of subsets. 

\begin{lemma}\label{l:rook} (a) Any rook cycle is a sum of some parallelepipeds.
 
(b) Any linear relation on parallelepipeds in $[n]^{k+1}$ is a sum of some relations 
$$P\times\{a,b\}+P\times\{b,c\}+P\times\{c,a\} =0$$ 
for pairwise distinct $a,b,c\in[n]$, and parallelepipeds $P\subset[n]^k$, and relations obtained from that by permutations of coordinates\footnote{\label{f:rela} Rigorously, a \emph{linear relation} is a set of parallelepipeds such that every vertex of $[n]^{k+1}$
is contained in an even number of parallelepipeds from this set.}.
\end{lemma}

\begin{proof}
For $a\in[n-1]^{k+1}$ denote by $P(a):=\{n,a_0\}\times\ldots\times\{n,a_k\}$ the parallelepiped with opposite vertices $a$ and $(n,\ldots,n)$.

For (a) it suffices to prove that {\it any rook cycle $C\subset[n]^{k+1}$ equals to the sum $P(C)$ of parallelepipeds $P(a)$ 
over $a\in C\cap[n-1]^{k+1}$.}  
Let us prove this. 
The sum $C+P(C)$ is a rook cycle. 
Since $P(a)\cap[n-1]^{k+1}=\{a\}$, we have $(C+P(C))\cap[n-1]^{k+1}=\varnothing$.
Together with any vertex $c$ whose $j$-th coordinate is~$n$, the sum $C+P(C)$ contains a vertex $c'$ having the same coordinates except the $j$-th coordinate, and having fewer coordinates equal to $n$ than~$c$.
Hence if non-empty, $C+P(C)$ contains an element from $[n-1]^{k+1}$. 
Thus $C+P(C)$ is empty, i.e., $C=P(C)$. 

Part (b) is the following statement for $r=k+1$. 
For any parallelepiped $P=P_0\times\ldots\times P_k$ define the \emph{complexity} 
to be the number of indices $j$ such that $P_j\subset[n-1]$.
Recall footnote \ref{f:rela}. 

\emph{Every linear relation on parallelepipeds of complexities at most $r$ is a sum of some relations from the conclusion of (b)}. 


Let us prove this statement by induction on $r$.

If $r=0$, then there are pairwise distinct vertices $a^1,\ldots,a^s\in[n-1]^{k+1}$ such that the given relation is $P(a^1)+\ldots+P(a^s)=0$.
The relation has no summands because 
$$\varnothing=(P(a^1)+\ldots+P(a^s))\cap[n-1]^{k+1}=\{a^1,\ldots,a^s\}.$$ 
Here the second equality holds because $P(a)\cap[n-1]^{k+1}=\{a\}$. 

Let us prove the inductive step $r\to r+1$. 

Let $P=P_0\times\ldots\times P_k$ be a parallelepiped such that $P_j\subset[n-1]$ for some $j$. 
Take $a,b\in[n-1]$ such that $P_j=\{a,b\}$. 
Let $P^a$ and $P^b$ be the parallelepipeds obtained from $P$ by replacing $P_j$ with $\{a,n\}$ and with $\{b,n\}$, respectively.
Then one of the relations from the conclusion of (b) is $\{P,P^a,P^b\}$ (written in the form $P+P^a+P^b=0$).
The complexities of $P^a$ and $P^b$ are smaller than the complexity of $P$. 

Consider any linear relation on parallelepipeds of complexities at most $r+1$. 
As in the previous paragraph, every parallelepiped of complexity $r+1$ can be replaced with parallelepipeds of complexities at most $r$, by adding a relation from the conclusion of (b).
Then by the inductive hypothesis the linear relation is a sum of some relations from the conclusion of (b). 
\end{proof}


Let $[n]^{*2}_{\Delta}$ 
be the graph obtained from $K_{n,n}$ by deleting all `diagonal' edges $jj'$, $j\in[n]$.  
E.g. $[3]^{*2}_{\Delta}$ 
is the cycle $12'31'23'$ of length $6$.   
(See the sense of the subscript in a notation of a deleted join in \cite[\S5]{Ma03}.)

\begin{lemma}\label{l:tkn4} 
If $n\ge4$, then any $1$-cycle in $[n]^{*2}_{\Delta}$ 
is a sum of some cycles of length $4$.
\end{lemma}

For a proof 
we need the following simple (and so known) statement. 

\begin{statement}[simple and known]\label{chl} 
Any $1$-cycle in a graph is a sum of some simple cycles for which there are no \emph{chords}, i.e., edges in the graph between two non-consecutive vertices in the cycle.
\end{statement}
 
\begin{proof} Any 1-cycle is a sum of some simple cycles. 
Any chord for a~simple cycle of length~$l$ gives a representation of the cycle as the sum of two simple cycles of lengths less than~$l$.
\end{proof}
  
\begin{proof}[Proof of Lemma \ref{l:tkn4}]
A~simple cycle in $[n]^{*2}_{\Delta}$ 
is formed by an $l$-element cyclic sequence of vertices such that 

$\bullet$  the number~$l$ is even and greater than~$2$; 

$\bullet$ the parts of vertices alternate in the sequence; 

$\bullet$ within each separate part, the vertices are not repeated; 

$\bullet$ there are no two consecutive vertices $m$ and $m'$.

By Statement \ref{chl} we may assume that given 1-cycle is a simple cycle for which there are no chords. 
Then any two non-consecutive vertices are not adjacent, i.e., they either are in the same part or are $m$ and $m'$. 
Any vertex has $\frac{l}{2}-2$ non-consecutive vertices from the other part. 
Then $\frac{l}{2}-2\le1$, so $l\in\{4,6\}$. 

\begin{figure}[h]\centering
\includegraphics[width=2cm]{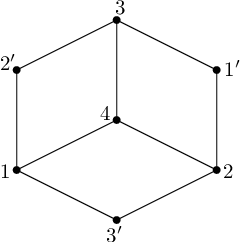}
\caption{A cycle of length 6 is the sum of three cycles of length 4}\label{f:cycles64}
\end{figure}

In a simple cycle of length~$6$ having no chords, any two opposite vertices are $m$ and $m'$. 
Hence the cycle is $(m_1,m'_2,m_3,m'_1,m_2,m'_3)$ for some pairwise distinct $m_1,m_2,m_3$. 
Since $n\ge4$, there is $a\in[n]-\{m_1,m_2,m_3\}$. 
Now the lemma holds because (Figure \ref{f:cycles64})  
$$(m_1,m_2',m_3,a')+(m_2,m_3',m_1,a')+(m_3,m_1',m_2,a').$$ 
\end{proof}

Let $t:[n]^{*2}_{\Delta}\to[n]^{*2}_{\Delta}$ 
be the involution switching the parts, i.e., switching $j$ and $j'$ for every $j\in[n]$.  
Denote by $t$ also the induced involution on the set $H_1([n]^{*2}_{\Delta})$ 
of all $1$-cycles in $[n]^{*2}_{\Delta}$.

\begin{lemma}\label{l:bas} The linear space $H_1([n]^{*2}_{\Delta})$ 
has a basis consisting of (the $t$-symmetric $1$-cycle) $[3]^{*2}_{\Delta}$ 
and pairs of $1$-cycles going to each other under $t$.  
\end{lemma}

\begin{proof} 
Let $S$ be the set of all edges $ij'$ of $[n]^{*2}_{\Delta}$ 
such that $i,j>1$ and $(i,j)\ne(3,2)$.
For $ij'\in S$ let 
$$\widehat{ij'} := \begin{cases} 12'31'ij' &i\ne3,j\ne2\\ 12'3j' &i=3\\ 2'31'i &j=2 \end{cases}.$$ 
Then 

$\bullet$ an edge $e'\in S$ is contained in the cycle $\widehat e$ if and only if $e=e'$; 

$\bullet$ the graph $[n]^{*2}_{\Delta}-S$ 
obtained from $[n]^{*2}_{\Delta}$ 
by deleting all edges of $S$ is a~tree. 
 
The second of these assertions follows because the edges of $[n]^{*2}_{\Delta}-S$ 
are exactly $32'$ and 
$1i',i1'$, where $i>1$, so this graph is obtained by connecting with edge $32'$ the `star' trees consisting of edges $1i'$ with $i>1$, and of edges $1'i$ with $i>1$. 

Then the cycles $\widehat{ij'}$, $ij'\in S$, form a basis of $H_1([n]^{*2}_{\Delta})$.

We have 
\[
t\widehat{ij'} = 1i'j1'23' = 1i'j1'32'12'31'23' = \widehat{23'}+\widehat{ji'}.
\]
Then the required basis is 
$$[3]^{*2}_{\Delta}=\widehat{23'},\quad \widehat{ij'},\quad t\widehat{ij'},\quad\text{where}\quad 
i>j>1 \quad\text{and}\quad (i,j)\ne(3,2).$$
\end{proof}

\begin{remark}\label{dif2}
Lemmas \ref{l:genegn} and \ref{l:rook} generalize to joins of finite sets of different sizes $n_0,\ldots,n_k\ge3$. 
The statements generalize as follows:

$\bullet$ the joinpower $[n]^{*k+1}$ is replaced with $[n_0]*\ldots*[n_k]$,

$\bullet$ the cube $[n]^{k+1}$ is replaced with the parallelepiped $[n_0]\times\ldots\times[n_k]$,

$\bullet$ the numbers $u,v,w,a,b,c$ of parts (b) should belong to $[n_k]$ not to $[n]$, and in the 'permuted' relations these numbers should belong to the corresponding $[n_j]$.

The proofs generalize by replacing 

$\bullet$ $[n-1]^{k+1}$ with $[n_0-1]\times\ldots\times[n_k-1]$,

$\bullet$ $\{n,a_0\}\times\ldots\times\{n,a_k\}$ with $\{n_0,a_0\}\times\ldots\times\{n_k,a_k\}$,

$\bullet$ $(n,\ldots,n)$ with $(n_0,\ldots,n_k)$,

$\bullet$ the phrase 'whose $j$-th coordinate is~$n$' with the phrase 'whose $j$-th coordinate is~$n_j$',

$\bullet$ the phrase 'having fewer coordinates equal to $n$' with the phrase 'having $j$-th coordinates equal to $n_j$ for fewer $j$'s',

$\bullet$ $P_j\subset[n-1]$ with $P_j\subset[n_j-1]$,

$\bullet$ $a,b\in[n-1]$ with $a,b\in[n_j-1]$,

$\bullet$ $\{a,n\}$ with $\{a,n_j\}$,

$\bullet$ $\{b,n\}$ with $\{b,n_j\}$.
\end{remark}

\talkonly{\newpage}
\section{Generators in the homology of the deleted product}\label{s:gendel}

Of this section, only Theorems \ref{r:dedu} and \ref{l:h2sym} are used in other sections. 
  
In this paper $K$ is a $k$-complex (for $k=1$, a graph). 
Denote by $E_k=E_k(K)$ the set of all closed $k$-faces of $K$. 
We consider the standard cell structure on $K^2$, for which $E_k^2$ is the set of all closed $2k$-faces (=cells) of $K^2$.
 
A \talkno{(cellular)} \textbf{$2k$-cycle} in $K^2$ is a subset $C\subset E_k^2$ such that every $(2k-1)$-face of $K^2$ is contained in an even number of $2k$-faces from $C$. 
In other words, such that for every $(k-1)$-face $a$ and $k$-face $\beta$ of $K$ 

$\bullet$ there is an even number of $k$-faces $\alpha$ of $K$ such that $\alpha\supset a$ and $(\alpha,\beta)\in C$, and 

$\bullet$ there is an even number of $k$-faces $\alpha$ of $K$ such that $\alpha\supset a$ and $(\beta,\alpha)\in C$. 
 
Consider the symmetry (involution) $s$ of $K^2$ switching the factors, and the corresponding symmetry of $2k$-cycles in $K^2$.   
A~$2k$-cycle is called \emph{symmetric} if $s$ preserves it.
Let 
$$K^{\times2}_\Delta := \{(\sigma,\tau)\in E_k^2\ :\ \sigma\cap\tau=\varnothing\}\quad\text{and}\quad 
U_{n,k}:=([n]^{*k+1})^{\times2}_\Delta$$ 
be the \emph{combinatorial deleted product} of a complex $K$, and of joinpower.     

For a graph $L$ homeomorphic to $K_5$ or $K_{3,3}$  

$\bullet$ and an edge $\sigma$ of $L$ denote by $[\sigma]$ the edge of $K_5$ or $K_{3,3}$ `containing' $\sigma$; 

$\bullet$ let the \emph{economic deleted product} of $L$ be  
$\{(\sigma,\tau)\in E_1(L)^2\ :\ [\sigma]\cap[\tau]=\varnothing\}.$ 

\begin{theorem}[{\cite[Theorem 4]{Ho07}}, cf. \aronly{\S\ref{s:gendelg}}\jonly{\cite[\S6]{SS23}}]\label{r:dedu}  
For a graph $K$ any symmetric $2$-cycle in $K^{\times2}_\Delta$ is a sum of some of the following ones: 

$\bullet$ $P\times Q+Q\times P$ for vertex-disjoint cycles $P,Q$, 

$\bullet$ the economic deleted product of a subgraph homeomorphic to $K_5$ or to $K_{3,3}$. 
\end{theorem}

\begin{theorem}\label{l:h2sym} 
(a) The following are $s$-symmetric $2k$-cycles in $U_{n,k}$:

$\bullet$ `symmetrized tori' $P\times Q+Q\times P$ for vertex-disjoint $k$-octahedra $P,Q$; 

$\bullet$ $U_{3,k}$. 


(b) Any $s$-symmetric $2k$-cycle in $U_{n,k}$ is a sum of some $2k$-cycles described in (a).  
\end{theorem}
  
\begin{remark}\label{r:gra} 
(a) For $k=1$ Theorem \ref{l:h2sym}.b states that \emph{any symmetric $2$-cycle in $U_{n,1}=(K_{n,n})^{\times2}_\Delta$ is a sum of some of the following ones: $U_{3,1}=(K_{3,3})^{\times2}_\Delta$, and `symmetrized tori' $P\times Q+Q\times P$ for vertex-disjoint cycles $P,Q$ of length $4$ in $K_{n,n}$.}  
Cf. 
Theorem \ref{r:dedu}. 
 
(b) Proof of Theorem \ref{l:h2sym}.b provides a simpler proof (and a higher-dimensional generalization) of \cite[the formula for $b_2(F(\Gamma,2))$ in Corollary 24]{FH10}: $\dim H_{2k}(U_{n,k}) =  (n^2-3n+1)^{k+1}$ for $n\ge3$.

\end{remark}
 

In this paper, the following operations are considered for subspaces in vector spaces over $\Z_2$: linear spans ($\langle\dots\rangle$), direct sums ($\oplus$), tensor products ($\otimes$) and $\ell$-th tensor powers ($\dots^{\otimes \ell}$).

\newcommand{\hag}[1]{\left\langle#1\right\rangle}

Cellular $j$-cycles in $K^j$ are defined analogously to $j=2$.  
Denote by $H_j(X)$ the group of (cellular) $j$-cycles in a (cell) complex $X$ of dimension $j$.   

\begin{theorem}\label{p:symgrak} (a) There is an isomorphism    
$$\psi : H_{2k}(U_{n,k}) \to H_1([n]^{*2}_{\Delta})^{\otimes k+1} \quad\text{such that}\quad \psi s = t^{\otimes k+1} \psi, \quad U_{3,k} = \psi^{-1}(([3]^{*2}_{\Delta})^{\otimes k+1}),$$ 
and the tensor product of any $k+1$ cycles of length 4 is the $\psi$-image of the product of some two disjoint $k$-octahedra. 

(b) We have 
$$\Ker(I-t^{\otimes \ell}) = \Img(I+t^{\otimes \ell})\oplus\hag{([3]^{*2}_{\Delta})^{\otimes \ell}}\subset 
H_1([n]^{*2}_{\Delta})^{\otimes \ell}.$$ 
\end{theorem}

\begin{proof}[Proof of Theorem \ref{l:h2sym}]
(a) This follows because any symmetrized torus is a symmetric $2k$-cycle and by Theorem \ref{p:symgrak}.a. 

(b) The elements of $H_1([n]^{*2}_{\Delta})^{\otimes k+1}$ 
invariant under the involution $t^{\otimes k+1}$ form the subgroup 
$\Ker(I-t^{\otimes k+1})$.  
By Lemma \ref{l:tkn4} $\Img(I+t^{\otimes k+1})$ is generated by the `symmetrized tori' 
$$a_0\otimes\ldots\otimes a_k+t(a_0)\otimes\ldots\otimes t(a_k) \in H_1([n]^{*2}_{\Delta})^{\otimes k+1},$$ 
for cycles $a_0,\ldots,a_k$ of length $4$ in $[n]^{*2}_{\Delta}$.
Thus (b) follows by Theorem \ref{p:symgrak}.a and Theorem \ref{p:symgrak}.b for $\ell=k+1$. 
\end{proof}

\begin{proof}[Proof of Theorem \ref{p:symgrak}.a]
Vectors $\alpha,\beta\in[n]^{k+1}$ are said to be \emph{disjoint} if $\alpha_j\ne\beta_j$ for every $j=0,\ldots,k$. 
For a set $X$, the set $2^X$ is considered with the operation of a sum modulo $2$.
Let $\psi : =\varkappa|_{\ldots} \varphi|_{\ldots}$ be the composition of the isomorphisms from the following commutative diagram: 
$$\xymatrix{
H_{2k}(U_{n,k}) \ar[r]^{\varphi|_{\ldots}} \ar@{^{(}->}[d] & H_{k+1}(([n]^{*2}_{\Delta})^{\times k+1}) 
\ar[r]^{\varkappa|_{\ldots}} \ar@{^{(}->}[d] &  H_1([n]^{*2}_{\Delta})^{\otimes k+1} \ar@{^{(}->}[d] \\
2^{E_{2k}(U_{n,k})} \ar[r]^\varphi \ar@/_1.7pc/[rr]^{\psi} 
& 2^{E_{k+1}(([n]^{*2}_{\Delta})^{\times k+1})} \ar[r]^\varkappa & (2^{E_1([n]^{*2}_{\Delta})})^{\otimes k+1}
}, \quad\text{where}
$$




$\bullet$ $\varkappa$ is the K\"unneth isomorphism that maps the product
$y_0\times\ldots\times y_k$ to $y_0\otimes\ldots\otimes y_k$; 

$\bullet$ $\varphi$ is the isomorphism induced by the 1--1 correspondence\footnote{Recall that for $2$-element subsets $P_0,\ldots,P_k\subset[n]$ the $k$-faces $a_0*\ldots*a_k$, $a_i\in P_i$, 
of the $k$-octahedron $P_0*\ldots*P_k$ are spanned by vertices $(i,a_i)$.}
$$\varphi' : E_{2k}(U_{n,k}) \to E_{k+1}(([n]^{*2}_{\Delta})^{\times k+1}) \text{ given by } 
\varphi'\bigl((\alpha_0*\ldots*\alpha_k)\times(\beta_0*\ldots*\beta_k)\bigr) =(\alpha_0*\beta_0)\times\ldots\times(\alpha_k*\beta_k)$$
between $2k$-cells and $(k+1)$-cells for disjoint $\alpha,\beta\in[n]^{k+1}$. 

If one of $\alpha_0,\ldots,\alpha_k,\beta_0,\ldots,\beta_k$ is replaced by the empty set, then the formula for $\varphi'$ gives a 1--1 correspondence between $(2k-1)$-cells of $U_{n,k}$ and $k$-cells of $([n]^{*2}_{\Delta})^{\times k+1}$. 
The two 1--1 correspondences respect incidence.  
Thus indeed $\varphi H_{2k}(U_{n,k}) = H_{k+1}(([n]^{*2}_{\Delta})^{\times k+1})$. 
 
Then for any disjoint $\alpha,\beta\in[n]^{k+1}$, we have 
\begin{equation}\label{rule}
\psi\bigl((\alpha_0*\ldots*\alpha_k)\times(\beta_0*\ldots*\beta_k)\bigr) = (\alpha_0*\beta_0)\otimes\ldots\otimes(\alpha_k*\beta_k).
\end{equation}

The equality $\psi s=t^{\otimes k+1}\psi$ holds because it is easily verified on generators:
\begin{gather*}
\psi s\bigl((\alpha_0*\ldots*\alpha_k)\times(\beta_0*\ldots*\beta_k)\bigr)=
\psi\bigl((\beta_0*\ldots*\beta_k)\times(\alpha_0*\ldots*\alpha_k)\bigr)=\\
=(\beta_0*\alpha_0)\otimes\ldots\otimes(\beta_k*\alpha_k)=(t(\alpha_0*\beta_0))\otimes\ldots\otimes(t(\alpha_k*\beta_k))=\\
=t^{\otimes k+1}(\alpha_0*\beta_0)\otimes\ldots\otimes(\alpha_k*\beta_k)=
t^{\otimes k+1}\psi\bigl((\alpha_0*\ldots*\alpha_k)\times(\beta_0*\ldots*\beta_k)\bigr).
\end{gather*}

Summing up \eqref{rule} over all disjoint $\alpha,\beta\in[3]^{k+1}$ gives $\psi(U_{3,k})=([3]^{*2}_{\Delta})^{\otimes k+1}$. 

For any cycle $a_j$ of length 4 in $[n]^{*2}_{\Delta}$, 
$j=0,\ldots,k$, there are disjoint $2$-element subsets $P_j,Q_j\subset [n]$ such that $a_j=P_j*Q_j$. 
Then $P=P_0*\ldots*P_k$ and $Q=Q_0*\ldots*Q_k$ are disjoint $k$-octahedra in $[n]^{*k+1}$.  
Summing up \eqref{rule} over all $\alpha_j\in P_j$ and $\beta_j\in Q_j$ gives $\psi(P\times Q)=a_0\otimes\ldots\otimes a_k$. 
\end{proof}

\begin{proof}[Proof of Theorem \ref{p:symgrak}.b]
Take tensor $\ell$-fold products of 1-cycles of~a basis from Lemma \ref{l:bas}. 
They form a~basis 
$$y_{-m},\ldots,y_{-1},\ y_0=([3]^{*2}_{\Delta})^{\otimes \ell},\ y_1,\ldots,y_m\quad\text{of}\quad 
H_1([n]^{*2}_{\Delta})^{\otimes \ell}\quad\text{such that}\quad t^{\otimes \ell}y_i=y_{-i}.$$
(I.e. the basis is involutively permuted by~$t^{\otimes \ell}$, with the only fixed vector~$y_0$.) 

Thus the subgroups $\Ker(I-t^{\otimes \ell})$ and $\Img(I+t^{\otimes \ell})$ have bases  
$$y_0,y_1+y_{-1},\ldots,y_m+y_{-m}\quad\text{and}\quad y_1+y_{-1},\ldots,y_m+y_{-m},$$ 
respectively. 
This implies Theorem \ref{p:symgrak}.b. 
\end{proof}

\begin{remark}\label{dif3}
Theorems \ref{l:h2sym} and \ref{p:symgrak} generalize to joins of finite sets of different sizes $n_0,\ldots,n_k\ge3$. 

The statements generalize by replacing (below $r=k+1,\ell$) 

$\bullet$ $U_{n,k}$ with $([n_0]*\ldots*[n_k])^{\times2}_\Delta$,

$\bullet$ $H_1([n]^{*2}_{\Delta})^{\otimes r}$ with $H_1([n_0]^{*2}_{\Delta})\otimes\ldots\otimes H_1([n_{r-1}]^{*2}_{\Delta})$,

$\bullet$ $t$ with the corresponding involutions $t_j$ of $[n_j]^{*2}_{\Delta}$, of $2^{E_1([n_j]^{*2}_{\Delta})}$, and of $H_1([n_j]^{*2}_{\Delta})$,


$\bullet$ $t^{\otimes r}$ with $t_0\otimes\ldots\otimes t_{r-1}$. 

The proofs generalize by replacing 

$\bullet$ $[n]^{k+1}$ with $[n_0]\times\ldots\times[n_k]$,

$\bullet$ $([n]^{*2}_{\Delta})^{\times k+1}$ with $[n_0]^{*2}_{\Delta}\times\ldots\times[n_k]^{*2}_{\Delta}$,

$\bullet$ $(2^{E_1([n]^{*2}_{\Delta})})^{\otimes k+1}$ 
with $2^{E_1([n_0]^{*2}_{\Delta})} \otimes\ldots\otimes 2^{E_1([n_k]^{*2}_{\Delta})}$,

$\bullet$ $[n]^{*k+1}$ with $[n_0]*\ldots*[n_k]$,

\aronly{$\bullet$ the phrase 'for cycles $a_0,\ldots,a_k$ of length $4$ in $[n]^{*2}_{\Delta}$' 
with the phrase 'for cycles $a_0,\ldots,a_k$ of length $4$ in $[n_0]^{*2}_{\Delta},\ldots,[n_k]^{*2}_{\Delta}$ 
respectively',}

$\bullet$ the phrase 
'For any cycle $a_j$ of length 4 in $[n]^{*2}_{\Delta}$, 
$j=0,\ldots,k$, there are disjoint $2$-element subsets $P_j,Q_j\subset [n]$' 
with the phrase 
'For any cycle $a_j$ of length 4 in $[n_j]^{*2}_{\Delta}$, 
$j=0,\ldots,k$, there are disjoint $2$-element subsets $P_j,Q_j\subset [n_j]$'.
\end{remark}

\talkonly{\newpage}
\section{Proof of the `if' part of Theorem \ref{t:embntfk}}\label{s:meta}

In this section $H,V$ are any linear spaces over $\Z_2$, and $\cdot:H\times H\to\Z_2$ is any  symmetric bilinear form. 

\begin{lemma}\label{l:homom}
Let $P_1,\ldots,P_p$ be generators of $V$, and $Y_1,\ldots,Y_p\in H$ be vectors.  
Assume that if a sum of some $P_j$ is zero, then the $\cdot$-product of the sum of the corresponding $Y_j$, and $Y_i$, is zero for any $i\in[p]$. 

Then there is a homomorphism $\psi:V\to H$ such that $\psi P_a\cdot\psi P_b=Y_a\cdot Y_b$ for every $a,b\in[p]$.
\end{lemma}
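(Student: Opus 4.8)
The plan is to build $\psi$ by choosing a basis and extending linearly, then verify the product formula. First I would pick a maximal subset of the $P_j$ that is linearly independent in $V$; after reindexing, say $P_1, \ldots, P_r$ is a basis of $V$ (this is possible since the $P_j$ generate $V$, which is therefore finite-dimensional — or at least finitely generated). Define $\psi$ on this basis by $\psi P_i := Y_i$ for $i \in [r]$, and extend linearly to all of $V$; this gives a well-defined homomorphism $\psi : V \to H$. The real content is then to check that $\psi P_a \cdot \psi P_b = Y_a \cdot Y_b$ for \emph{all} $a, b \in [p]$, not just for $a, b \le r$, where it holds by construction.

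For the general case, fix $a \in [p]$ and write $P_a = \sum_{i \in S} P_i$ for some $S \subseteq [r]$ (using that $P_1, \ldots, P_r$ is a basis). Then $P_a + \sum_{i \in S} P_i = 0$ in $V$, i.e. this is a vanishing sum of $P_j$'s. Applying the hypothesis to this relation: for every $b \in [p]$, the $\cdot$-product of $\left(Y_a + \sum_{i \in S} Y_i\right)$ with $Y_b$ is zero, so by bilinearity $Y_a \cdot Y_b = \sum_{i \in S} Y_i \cdot Y_b$. On the other hand, $\psi P_a = \sum_{i \in S} \psi P_i = \sum_{i \in S} Y_i$ by linearity of $\psi$ and the definition on the basis. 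Hence $\psi P_a \cdot \psi P_b = \left(\sum_{i \in S} Y_i\right) \cdot \psi P_b$. Now I would run the same argument in the second slot: write $P_b = \sum_{j \in T} P_j$ with $T \subseteq [r]$; applying the hypothesis to the relation $P_b + \sum_{j \in T} P_j = 0$ tested against each $Y_i$ with $i \in S$ gives $Y_i \cdot Y_b = \sum_{j \in T} Y_i \cdot Y_j$, and also $\psi P_b = \sum_{j \in T} Y_j$. Combining, $\psi P_a \cdot \psi P_b = \sum_{i \in S}\sum_{j \in T} Y_i \cdot Y_j = \sum_{i \in S} Y_i \cdot Y_b = Y_a \cdot Y_b$, as required.

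The main obstacle — really the only subtlety — is bookkeeping with the hypothesis: it is phrased as "if a sum of some $P_j$ is zero, then the $\cdot$-product of the sum of the corresponding $Y_j$ with $Y_i$ is zero for any $i$", and one must be careful that the relation $P_a = \sum_{i \in S} P_i$ is genuinely a relation "among the $P_j$'s" (the index $a$ together with the indices in $S$, possibly with repetition handled mod $2$ since we are over $\Z_2$), so that the hypothesis legitimately applies and yields a statement tested against \emph{every} $Y_i$, including those with index in $S$ or $T$. Once this is set up cleanly, the computation is just bilinearity. Note also that we only ever test against $Y_i$ for indices $i$ appearing among $P_1,\dots,P_p$, which is exactly what the hypothesis provides, so no extra vectors of $H$ are needed beyond $Y_1,\dots,Y_p$.
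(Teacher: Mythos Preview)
Your proposal is correct and follows essentially the same approach as the paper: choose a basis among the $P_j$, define $\psi$ on the basis by $\psi P_j:=Y_j$, extend linearly, and then verify the product identity using the hypothesis applied to the relation $P_a+\sum_{i\in S}P_i=0$. The only cosmetic difference is in the final computation: the paper packages it via the identity $\psi P_a\cdot\psi P_b-Y_a\cdot Y_b=(\psi P_a-Y_a)\cdot Y_b+Y_a\cdot(\psi P_b-Y_b)+(\psi P_a-Y_a)\cdot(\psi P_b-Y_b)$ and observes each summand vanishes, whereas you do the equivalent double expansion over $S$ and $T$.
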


\begin{proof}
\footnote{Observe that the map $P_a\mapsto Y_a$ need not extend to a homomorphism $V\to H$.}
Without loss of generality, $P_1,\ldots,P_t$ is a basis of $V$. 
Define a homomorphism $\psi:V\to H$ by setting $\psi(P_j):=Y_j$ for every $j\in[t]$. 

Express any $P_a$ as a sum of some basic elements: $P_a=\sum_j P_j$. 
Then $\psi P_a-Y_a = \sum_j Y_j-Y_a$ 

$\bullet$ is a sum of some of vectors $Y_1,\ldots,Y_p$;  

$\bullet$ has zero $\cdot$-product with $Y_i$ for any $i\in[p]$ (by the assumption).  
 
Hence 
$$\psi P_a\cdot\psi P_b - Y_a\cdot Y_b = 
(\psi P_a-Y_a)\cdot Y_b + Y_a\cdot(\psi P_b-Y_b) + (\psi P_a-Y_a)\cdot(\psi P_b-Y_b) = 0.$$ 
\end{proof} 

For a map $y:E_k\to H$ define 

$\bullet$ a homomorphism $\widehat y:H_k(K)\to H$ by $\widehat y(A) := \sum\limits_{\sigma\in A} y(\sigma)$; 

$\bullet$ $y^2(C,\cdot)  := \sum\limits_{\{\sigma,\tau\} \in C/s} y(\sigma)\cdot y(\tau)\in\Z_2$, where $C$ is a symmetric $2k$-cycle in $K^2$. 


\begin{lemma}\label{l:evalc}  (a) For any homomorphism $\psi:H_k(K)\to H$ there is a map $y:E_k\to H$ such that $\psi=\widehat y$. 

(b) Let $y:E_k\to H$ be a map.  
For $k$-cycles $P_1,\ldots,P_q$, $Q_1,\ldots,Q_q$ in $K$ take a symmetric $2k$-cycle  $C_{P,Q} := \sum\limits_{j=1}^q(P_j\times Q_j+Q_j\times P_j)$  in $K^2$.    
Then $y^2(C_{P,Q},\cdot) =  \sum\limits_{j=1}^q \widehat y(P_j)\cdot\widehat y(Q_j)$.   
\end{lemma}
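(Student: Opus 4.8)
The plan is to prove the two parts of Lemma \ref{l:evalc} directly from the definitions, since each is essentially a bookkeeping computation once the right objects are unwound.

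For part (a), I would first recall that $H_k(K)$ is by definition the $\Z_2$-vector space of all $k$-cycles in $K$, which sits inside the $\Z_2$-vector space $\Z_2^{E_k}$ of all subsets of $E_k$ (with symmetric difference as addition). The map $y \mapsto \widehat y$ sends a function $E_k \to H$ to the homomorphism $A \mapsto \sum_{\sigma \in A} y(\sigma)$ on $H_k(K)$; equivalently, a function $y$ is the same data as a homomorphism $\Z_2^{E_k} \to H$ (send the basis vector $\{\sigma\}$ to $y(\sigma)$), and $\widehat y$ is its restriction to the subspace $H_k(K)$. So the claim $\psi = \widehat y$ for some $y$ is exactly the statement that every homomorphism out of the subspace $H_k(K)$ extends to a homomorphism out of $\Z_2^{E_k}$ — which holds because every subspace of a vector space is a direct summand (pick a basis of $H_k(K)$, extend it to a basis of $\Z_2^{E_k}$, define $y$ arbitrarily, e.g.\ as zero, on the new basis vectors). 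I would write this out in one or two sentences.

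For part (b), I would unwind the definition of $y^2(C, \cdot)$. By definition $y^2(C_{P,Q}, \cdot) = \sum_{\{\sigma,\tau\} \in C_{P,Q}/s} y(\sigma)\cdot y(\tau)$, the sum over $s$-orbits of $2k$-faces in the symmetric $2k$-cycle $C_{P,Q} = \sum_j (P_j \times Q_j + Q_j \times P_j)$. The key observation is that, generically, the $s$-orbit of a cell $(\sigma,\tau)$ appearing in some $P_j\times Q_j$ is $\{(\sigma,\tau),(\tau,\sigma)\}$, with $(\tau,\sigma)$ sitting in the corresponding $Q_j \times P_j$; summing $y(\sigma)\cdot y(\tau)$ over such orbits with multiplicities counted mod $2$ gives $\sum_j \sum_{\sigma \in P_j, \tau \in Q_j} y(\sigma)\cdot y(\tau) = \sum_j \bigl(\sum_{\sigma \in P_j} y(\sigma)\bigr)\cdot\bigl(\sum_{\tau \in Q_j} y(\tau)\bigr) = \sum_j \widehat y(P_j)\cdot\widehat y(Q_j)$, using bilinearity of $\cdot$. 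I would present this as: expand $C_{P,Q}$ as a formal symmetric difference of products, observe each product $P_j\times Q_j$ contributes $\sum_{\sigma\in P_j,\tau\in Q_j}(\sigma,\tau)$ (as a multiset, then reduced mod $2$), pair it with the term $(\tau,\sigma)$ from $Q_j\times P_j$ under $s$, and note that factoring out the $s$-action exactly divides by $2$ each unordered pair, leaving $\sum_j \widehat y(P_j)\cdot \widehat y(Q_j)$ after applying bilinearity.

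The one place requiring care — the main obstacle — is the diagonal/degenerate terms: cells of the form $(\sigma,\sigma)$ (fixed by $s$), and cancellations when the same ordered cell $(\sigma,\tau)$ appears in several of the products $P_j\times Q_j$ or $Q_j\times P_j$. One must check that $C_{P,Q}$ is genuinely a set (the mod-$2$ sum) and that $y^2(-,\cdot)$ is well-defined and linear on symmetric $2k$-cycles, so that $y^2(C_{P,Q},\cdot) = \sum_j y^2(P_j\times Q_j + Q_j\times P_j, \cdot)$ term by term. For a single symmetrized torus $P\times Q + Q\times P$ with $P,Q$ vertex-disjoint $k$-cycles, there are no diagonal cells (since a cell $(\sigma,\sigma)$ would force $\sigma \subset P$ and $\sigma\subset Q$, impossible as $P,Q$ are disjoint sets of faces), the map $(\sigma,\tau)\mapsto(\tau,\sigma)$ is a free involution identifying $P\times Q$ with $Q\times P$, and $y^2(P\times Q+Q\times P,\cdot) = \sum_{\sigma\in P,\tau\in Q} y(\sigma)\cdot y(\tau) = \widehat y(P)\cdot\widehat y(Q)$ directly. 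Linearity of $y^2(-,\cdot)$ in $C$ (which follows since $y(\sigma)\cdot y(\tau) + y(\tau)\cdot y(\sigma) = 0$ in $\Z_2$, so each orbit contributes additively and consistently under symmetric difference) then gives the general formula by summing over $j$. I would state and use this linearity explicitly rather than re-deriving the orbit count for the full sum.
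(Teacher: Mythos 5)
For part (a), your argument is correct and is the elementary form of what the paper only sketches: the paper cites nondegeneracy of the pairing $H^k(K;H)\times H_k(K;H)\to H$ (Universal Coefficients), while you observe directly that $H_k(K)$ is a subspace of $\Z_2^{E_k}$, hence a direct summand, so any homomorphism from it extends to $\Z_2^{E_k}$ and therefore is $\widehat y$ for some $y:E_k\to H$. Same content, cleaner packaging.

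For part (b) the paper records no proof at all (the text only says ``direct calculation''), so the issue is whether your calculation closes — and it does not quite. Your reduction to a single symmetrized torus via linearity of $C\mapsto y^2(C,\cdot)$ on symmetric $2k$-cycles is fine, and so is $y^2(P\times Q+Q\times P,\cdot)=\widehat y(P)\cdot\widehat y(Q)$ when $P$ and $Q$ are \emph{disjoint sets of $k$-faces}. But the step ``linearity then gives the general formula by summing over $j$'' silently assumes each pair $P_j,Q_j$ is disjoint, which the lemma does not. When $P\cap Q\neq\varnothing$, tracking the $s$-orbits gives
$$y^2(P\times Q+Q\times P,\cdot)=\widehat y(P)\cdot\widehat y(Q)+\sum_{\sigma\in P\cap Q}y(\sigma)\cdot y(\sigma),$$
since the diagonal cells $(\sigma,\sigma)$ with $\sigma\in P\cap Q$ cancel out of $P\times Q+Q\times P$ (hence contribute nothing to the left) but contribute $y(\sigma)\cdot y(\sigma)$ to $\widehat y(P)\cdot\widehat y(Q)$. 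Summing over $j$, your argument actually yields $y^2(C_{P,Q},\cdot)=\sum_j\widehat y(P_j)\cdot\widehat y(Q_j)+\sum_\sigma c_\sigma\,y(\sigma)\cdot y(\sigma)$ with $c_\sigma:=|\{j:\sigma\in P_j\cap Q_j\}|\bmod 2$, and the correction term need not vanish for a general symmetric (not necessarily alternating) form: take $q=1$, $P_1=Q_1\neq\varnothing$, so $C_{P,Q}=0$, yet $\widehat y(P_1)\cdot\widehat y(P_1)$ can be nonzero. So your proof needs the extra hypothesis that every $c_\sigma$ is even (equivalently that the diagonal cancels), which in fact holds in both places the paper invokes part (b): for symmetrized tori of vertex-disjoint octahedra all $c_\sigma=0$, and for $([3]^{*k+1})^{\times2}_\Delta$ written as the sum over the $2^k$ pairs through $1^{*k+1}$ the only candidate is $c_{1^{*k+1}}=2^k\equiv 0$. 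You should either impose this hypothesis explicitly or carry the correction term through the computation.
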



\begin{proof}[Proof of (a)]
(Part (a) is simple and known.)  
The linear operator $\psi$ extends to a linear operator $\t{y}:2^{E_k}\to H$. 
Define the map $y:E_k\to H$ by $y(\sigma):=\t{y}(\{\sigma\})$.  
So if $A\in H_k(K)$, then $\widehat y(A) = \sum\limits_{\sigma\in A} y(\sigma) = \sum\limits_{\sigma\in A} \t{y}(\{\sigma\}) = \t{y}(A) = \psi(A)$. 
\end{proof}

Part (b) is proved by direct calculation.  

Denote by $|S|_2\in\Z_2$ the number of elements modulo 2 in a finite set $S$. 

Let $f:K\to\R^{2k}$ be a general position PL map (see definition in \cite[Definition 1.1.6.a]{Sk24}).  
For a symmetric $2k$-cycle $C$ in $K^{\times2}_\Delta$ let the {\it $C$-van Kampen number of $K$} be 
$$v(C)=v_K(C) := \sum\limits_{\{\sigma,\tau\}\in C/s} |f\sigma\cap f\tau|_2\in\Z_2,$$
Recall that $v_K(C)$ depends only on $K,C$, not on $f$, cf. \cite[Remark 1.5.5.c]{Sk18}.
 

\begin{theorem}[E. Kogan; {\cite[(E)$\Leftrightarrow$(R') of Proposition 2.5.1]{KS21e}}]\label{t:homembcri} 
Let $M$ be a closed $(k-1)$-connected $2k$-manifold. 
There is a $\Z_2$-embedding $K\to M$ if and only if 

(R') there is a map $y:E_k\to H_k(M)$ such that for any symmetric $2k$-cycle $C$ in $K^{\times2}_\Delta$ we have $v_K(C) = y^2(C,\cap_M)$. 
\end{theorem}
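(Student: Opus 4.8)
The plan is to prove both implications by comparing the fixed map $f:K\to\R^{2k}$ with maps $g:K\to M$, showing that the ``van Kampen obstruction of $g$'' differs from the intrinsic one by a homological correction of the form $y^2(\cdot,\cap_M)$. For a general position PL map $g:K\to M$ write $v_g(C):=\sum_{\{\sigma,\tau\}\in C/s}|g\sigma\cap g\tau|_2$, so that $v=v_K$ is $v_g$ computed from $f$; note that $v_g$ depends only on the homotopy class of $g$ (a general position homotopy alters the $2k$-cochain $\sigma\times\tau\mapsto|g\sigma\cap g\tau|_2$ on $K^{\times2}_\Delta/s$ by a coboundary), and that a $\Z_2$-embedding $g$ has $v_g\equiv0$. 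Fix a $2k$-ball $B\subset M$ and a homeomorphism $\R^{2k}\xrightarrow{\;\approx\;}\mathring B$; composing, regard $f$ as a general position map $g_0:K\to B\subset M$, so $|g_0\sigma\cap g_0\tau|_2=|f\sigma\cap f\tau|_2$ and hence $v_{g_0}=v_K$. For an arbitrary $g:K\to M$, using that $M$ is $(k-1)$-connected (so $g|_{K^{(k-1)}}$, like $g_0|_{K^{(k-1)}}$, is null-homotopic) together with the homotopy invariance of $v_g$, I may and will arrange that $g$ coincides with $g_0$ on a neighbourhood $N$ of the $(k-1)$-skeleton $K^{(k-1)}$ and is pushed out of $B$ off $N$. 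For each $k$-face $\sigma$ the chain $g\sigma+g_0\sigma$ is then a $k$-cycle (its boundary is $g\partial\sigma+g_0\partial\sigma=0$), and I set $y_g(\sigma):=[g\sigma+g_0\sigma]\in H_k(M)$; replacing $g_0\sigma$ by any $k$-chain in $B$ with the same boundary changes the representative by a cycle in the acyclic ball $B$, so $y_g(\sigma)$ is well defined.

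The engine of the proof is the following \emph{comparison lemma}: for $g$ in the above normal form and every symmetric $2k$-cycle $C$ in $K^{\times2}_\Delta$ one has $v_g(C)=v_K(C)+y_g^2(C,\cap_M)$; moreover every map $y:E_k\to H_k(M)$ arises as $y_g$ for some such $g$. For the identity, fix a vertex-disjoint pair $\sigma,\tau$ and expand the mod-$2$ intersection number of the $k$-cycles $g\sigma+g_0\sigma$ and $g\tau+g_0\tau$ bilinearly:
$$y_g(\sigma)\cap_M y_g(\tau)=|g\sigma\cap g\tau|_2+|g_0\sigma\cap g_0\tau|_2+|g\sigma\cap g_0\tau|_2+|g_0\sigma\cap g\tau|_2 .$$
Since $g_0\sigma,g_0\tau\subset B$ while $g$ has been pushed out of $B$ away from $N$, the last two summands record only intersections occurring inside the contractible ball $B$; a $\Z_2$-bookkeeping that for fixed $\tau$ groups the cells of the $2k$-cycle $C$ into the simplicial cycle $\{\sigma:(\sigma,\tau)\in C\}$ shows $\sum_{\{\sigma,\tau\}\in C/s}\big(|g\sigma\cap g_0\tau|_2+|g_0\sigma\cap g\tau|_2\big)=0$. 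Summing the displayed identity over $C/s$ and using $v_{g_0}=v_K$ gives the claim. For the ``moreover'' part, start from $g_0$, keep $g=g_0$ on $N$, and over each $k$-face $\sigma$ extend by a map $(\sigma,\partial\sigma)\to(M,B)$ representing $y(\sigma)$ under $\pi_k(M,B)\cong\pi_k(M)\twoheadrightarrow H_k(M;\Z)\to H_k(M)$ (the surjectivity is the Hurewicz theorem, valid since $M$ is $(k-1)$-connected); a small general-position perturbation supported near $K^{(k-1)}$ yields $g$ with $y_g=y$.

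Granting the comparison lemma, the theorem is immediate. If $K\to M$ is a $\Z_2$-embedding, put it in the normal form above (homotopy preserves $v_g$); then $v_g\equiv0$, so $v_K(C)=y_g^2(C,\cap_M)$ for every symmetric $2k$-cycle $C$, which is $(R')$ with $y:=y_g$. Conversely, given $y$ satisfying $(R')$, take $g$ with $y_g=y$; the comparison lemma gives $v_g(C)=v_K(C)+y^2(C,\cap_M)=v_K(C)+v_K(C)=0$ for every symmetric $2k$-cycle $C$, i.e. the $2k$-cochain $\sigma\times\tau\mapsto|g\sigma\cap g\tau|_2$ on $K^{\times2}_\Delta/s$ pairs trivially with every $2k$-cycle and is therefore a coboundary. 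By the mod-$2$ van Kampen--Wu realization principle --- realize each generating coboundary by a finger move pushing the image of a $(k-1)$-face of $K$ across the image of a $k$-face, a local modification available inside the $2k$-manifold $M$ --- $g$ is deformed through general position maps to one with vanishing cochain, i.e. to a $\Z_2$-embedding $K\to M$.

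I expect the main obstacle to be the $\Z_2$-bookkeeping in the comparison lemma: one must set up the normal forms of $f$ (as $g_0$) and of $g$ so that \emph{every} intersection number in the bilinear expansion of $y_g(\sigma)\cap_M y_g(\tau)$ other than $|g\sigma\cap g\tau|_2$ and $|g_0\sigma\cap g_0\tau|_2$ is a count of points lying in $B$, and then see that these cancel after summation over $C$ --- which uses in an essential way that $C$ is a cycle, not merely that it lies in the deleted product. The two ``known'' ingredients are standard for maps to $\R^{2k}$: homotopy invariance of $v_g$, and the realization of a coboundary by finger moves (which, for $\Z_2$-embeddings, holds in all dimensions, the local move being unobstructed); both carry over verbatim to a general closed $2k$-manifold target since the moves take place in small balls.
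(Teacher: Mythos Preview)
Your approach is essentially correct and is a genuinely different route from the paper's. The paper does not argue geometrically at all: it quotes the cohomological criterion (RH) of Theorem~\ref{t:cohomembcri} (due to \cite{PT19,Sk24}) as a black box, and then proves $(R')\Leftrightarrow(\mathrm{RH})$ by a short duality argument --- the cochains $\omega(y)$ and $\nu(f)$ on $K^*$ are cohomologous iff they pair equally with every $2k$-cycle, which is exactly $(R')$. Your argument instead unpacks that black box: the ``comparison lemma'' $v_g(C)=v_K(C)+y_g^2(C,\cap_M)$ together with the realizability of every $y$ as some $y_g$ is, in effect, a direct proof of (RH) specialized to the pairing with cycles, and your final ``finger move'' step is the standard realization of an equivariant coboundary. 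What you gain is self-containment and a transparent geometric picture; what the paper's route gains is brevity and modularity (all the delicate general-position bookkeeping is hidden in the citation).

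Two small comments on your write-up. First, with your normal form ($g=g_0$ on a thin $N$ and $g(K\setminus N)\cap B=\varnothing$), the mixed terms $|g\sigma\cap g_0\tau|_2$ actually vanish \emph{termwise}, not just after summation: $g\sigma\cap g_0\tau\subset B$ forces the intersection into $g_0(\sigma\cap N)\cap g_0\tau$, and for $N$ thin this is empty by general position since $\dim(\partial\sigma)+\dim\tau<2k$. So the comparison lemma holds already at the level of cochains on $K^{\times2}_\Delta/s$, and you do not need that $C$ is a cycle there (you do need it, of course, for the homotopy invariance of $v_g$). Second, the step ``pairs trivially with every cycle $\Rightarrow$ is a coboundary'' is exactly the nondegeneracy of $\frown:H^{2k}(K^*)\times H_{2k}(K^*)\to\Z_2$ that the paper invokes; it is worth naming it, since this is the only place where being a cycle in $K^{\times2}_\Delta$ (rather than an arbitrary cochain) enters.
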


Theorem \ref{t:homembcri} follows because (as we prove below) (R') is equivalent to the condition (RH) of Theorem \ref{t:cohomembcri} below. 

\talkonly{\newpage}
\begin{proof}[Proof of the `if' part of Theorem \ref{t:embntfk}]
Let $V=H_k([n]^{*k+1})$. 
In this paragraph we prove that there is a homomorphism $\psi:V\to H_k(M)$ such that 
$$(*)\qquad\psi P\cap_M\psi Q=(Y^T\Omega_MY)_{P,Q}\quad\text{for any $k$-octahedra}\quad P,Q.$$
Take $p={n\choose2}^{k+1}$ and all different $k$-octahedra $P_1,\ldots,P_p\in V$. 
By Lemma \ref{l:genegn}.a they generate $V$. 
Take a basis of $H_k(M)$ in which $\cap_M$ has matrix $\Omega_M$. 
For every $a\in[p]$ let $Y_a\in H_k(M)$ be the vector expressed in this basis as the column of the given matrix $Y$ corresponding to $P_a$.  
Then for any $a,b\in[p]$ we have $Y_a\cap_M Y_b=(Y^T\Omega_MY)_{a,b}$. 
So by Lemma \ref{l:genegn}.b and the additivity of $Y^T\Omega_MY$ the assumption of Lemma \ref{l:homom} is fulfilled for $H=H_k(M)$ and $\cdot=\cap_M$. 
This gives the required homomorphism $\psi$. 
 
By Lemma \ref{l:evalc}.a there is a map $y:E_k\to H_k(M)$  such that $\psi=\widehat y$.
Both $y^2(C,\cap_M)$ and $v(C)$ are linear in $C$. 
So by Theorems \ref{t:homembcri} and \ref{l:h2sym}.b it suffices to check the equality $y^2(C,\cap_M)=v(C)$ only for $2k$-cycles $C$ from Theorem \ref{l:h2sym}.a. 

Let $C=P\times Q+Q\times P$ for vertex-disjoint $k$-octahedra $P,Q$. 
Then  
$$y^2(C,\cap_M) \overset{(1)}= \psi(P)\cap_M\psi(Q) \overset{(2)}= 0 \overset{(3)}= |f(P)\cap f(Q)|_2 \overset{(4)}= v(C),\quad\text{where}$$  

$\bullet$ (1) holds by Lemma \ref{l:evalc}.b; 

$\bullet$ (2) holds by (*) and the independence; 

$\bullet$ (3) holds by the well-known Parity Lemma\talkno{ (asserting that every two general position $k$-cycles in $\R^{2k}$ intersect at an even number of points, cf. \cite[\S1.3, \S4.8 `Algebraic intersection number', and Lemma~5.3.4]{Sk})};  


$\bullet$ (4) holds by the definition of $v(C)$.  

Let $C=U_{3,k}$. 
Then $C=\sum(P\times Q+Q\times P)$ is the sum over all $2^k$ unordered pairs $\{P,Q\}$ of $k$-octahedra $P,Q\subset [3]^{*k+1}$ whose only common $k$-face is $1^{*k+1}$ \talkno{\cite[Lemma 2.3]{DS22}}\talkonly{\cite{DS22}}.  
Hence  
$$y^2(C,\cap_M) \overset{(1)}= \sum
\psi(P)\cap_M\psi(Q) \overset{(2)}= 
S_{[3]^{*k+1},1^{*k+1}} Y^T\Omega_MY \overset{(3)}=1 \overset{(4)}= v(C),\quad\text{where}$$  

$\bullet$ (1) holds by Lemma \ref{l:evalc}.b; 

$\bullet$ (2) holds by (*) and  
definition of $S_{X,e}$; 

$\bullet$ (3) holds by the non-triviality;  
and 

$\bullet$ (4) holds by \cite[Satz 5]{vK32} (for an alternative modern proof see \cite[\S4, proof of Lemma 4.2]{DS22}).  
\end{proof}

\begin{theorem}\label{t:cohomembcri} 
Let $M$ be a closed $(k-1)$-connected $2k$-manifold. 
There is a $\Z_2$-embedding $K\to M$ if and only if 

(RH) there is a homomorphism $\psi:H_k(K)\to H_k(M)$ such that $\omega(\psi)=v_K$ (see the required definitions below).
\end{theorem}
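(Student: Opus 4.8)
The plan is to define $\omega$ so that (RH) becomes a mere restatement of condition (R') of Theorem \ref{t:homembcri}, and then to identify the existence of a $\Z_2$-embedding with (R') using the classical mod $2$ van Kampen obstruction theory for $\Z_2$-embeddings. This is the `standard deduction from known results' announced in Remark \ref{r:othcom}.a.

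First I would define $\omega$. Given a homomorphism $\psi\colon H_k(K)\to H_k(M)$, choose by Lemma \ref{l:evalc}.a a map $y\colon E_k\to H_k(M)$ with $\widehat y=\psi$, and put $\omega(\psi)(C):=y^2(C,\cap_M)$ for every symmetric $2k$-cycle $C$ in $K^{\times2}_\Delta$. This does not depend on the choice of $y$: if $\widehat y=\widehat{y'}$, then $y-y'$ vanishes on all $k$-cycles, i.e. $y-y'=\delta g$ for a map $g$ on $(k-1)$-faces, and for a symmetric $2k$-cycle $C$ \emph{lying in the deleted product} the difference $y^2(C,\cap_M)-(y+\delta g)^2(C,\cap_M)$ is a sum of a `mixed' term, which vanishes because every $(2k-1)$-cell of $K^2$ is contained in an even number of cells of $C$, and a `quadratic' term, which vanishes because the involution $s$ pairs up its summands. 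Consequently a homomorphism $\psi$ with $\omega(\psi)=v_K$ is literally the same data as a map $y$ with $v(C)=y^2(C,\cap_M)$ for all symmetric $2k$-cycles $C$ (take $\psi=\widehat y$, and conversely); so (RH) holds for some $\psi$ if and only if (R') holds, and it remains to prove that a $\Z_2$-embedding $K\to M$ exists if and only if (R') holds.

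The geometric core is the following two standard facts about a general position PL map $f\colon K\to M$; write $V_f(C):=\sum_{\{\sigma,\tau\}\in C/s}|f\sigma\cap f\tau|_2$ for a symmetric $2k$-cycle $C$ in $K^{\times2}_\Delta$. \emph{(Splitting.)} For any $y\colon E_k\to H_k(M)$ with $\widehat y=f_*$ we have
$$V_f(C)\ =\ v_K(C)\ +\ y^2(C,\cap_M)\qquad\text{for every symmetric }2k\text{-cycle }C\text{ in }K^{\times2}_\Delta.$$
This is seen by homotoping $f$ into a connected sum of a map into a ball $\R^{2k}\subset M$ (contributing the intrinsic term $v_K(C)$; here all $y(\sigma)$ may be taken $0$) with spheres on the $k$-faces representing the classes $y(\sigma)\in H_k(M)\cong\pi_k(M)$, and then counting the three types of contribution to each $|f\sigma\cap f\tau|_2$, the cross-contributions cancelling over a symmetric $2k$-cycle exactly as in the previous paragraph. \emph{(Completeness.)} If $V_f(C)=0$ for every symmetric $2k$-cycle $C$ in $K^{\times2}_\Delta$, then $f$ can be perturbed to a $\Z_2$-embedding. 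Indeed, $V_f$ is the evaluation, on symmetric $2k$-cycles, of the mod $2$ van Kampen cocycle $(\sigma,\tau)\mapsto|f\sigma\cap f\tau|_2$; its cohomology class lies in $H^{2k}$ of the free quotient $K^{\times2}_\Delta/s$, which at this top dimension is faithfully detected by values on $2k$-cycles, and the vanishing of that class lets one remove all excess double points of $f$ by finger moves — no metastable hypothesis on $k$ is needed because we only want a $\Z_2$-embedding.

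Assembling: if $g\colon K\to M$ is a $\Z_2$-embedding, put $\psi:=g_*$ and choose $y$ with $\widehat y=\psi$; then $V_g\equiv0$, so Splitting gives $v(C)=v_K(C)=y^2(C,\cap_M)$ for all $C$, i.e. (R'). Conversely, given $\psi$ with $\omega(\psi)=v_K$, use that $M$ is $(k-1)$-connected — hence, by obstruction theory, homotopy classes of maps of the $k$-complex $K$ into $M$ surject onto $\Hom(H_k(K),H_k(M))$ (cf. Lemma \ref{l:evalc}.a) — to realize $\psi$ by a general position PL map $f\colon K\to M$ with $f_*=\psi$; then Splitting gives $V_f(C)=v_K(C)+\omega(\psi)(C)=0$ for all symmetric $2k$-cycles $C$, so Completeness perturbs $f$ to a $\Z_2$-embedding. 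I expect the main technical point to be Splitting: organizing the `connected sum with bubbles' picture so that the coboundary-type ambiguities (in $y$ and in the choice of the bubbling spheres) provably die on symmetric $2k$-cycles; the rest is either bookkeeping already prepared by Lemma \ref{l:evalc}, or a direct appeal to the literature for the obstruction-theoretic inputs.
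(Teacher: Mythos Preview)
Your approach matches the paper's. The paper does not prove Theorem~\ref{t:cohomembcri} from scratch either: it cites \cite[Proposition~21 and Theorem~15]{PT19} and \cite[Theorem~1.1.5 and Lemma~2.1.1]{Sk24} for the statement, defines $\omega(\psi):=[\omega(y)]\in H^{2k}(K^*)$ as a cohomology class, and then proves the equivalence (RH)$\Leftrightarrow$(R') via the nondegenerate pairing $\frown$ --- exactly your reduction of (RH) to (R'). Your definition of $\omega(\psi)$ as the function $C\mapsto y^2(C,\cap_M)$ is the same object written through that pairing. Your additional sketch of the geometric content (``Splitting'' and ``Completeness'') is an accurate outline of what the cited references establish, and you correctly identify Splitting as the substantive step requiring the $(k-1)$-connectedness of $M$.

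One caveat: your justification that the quadratic term $\sum_{\{\sigma,\tau\}\in C/s}\delta g(\sigma)\cap_M\delta g(\tau)$ vanishes ``because the involution $s$ pairs up its summands'' does not work as written --- that sum is already over $s$-orbits, so $s$ pairs nothing. The mixed term does vanish by your coboundary argument, but the quadratic cochain $\{\sigma,\tau\}\mapsto\delta g(\sigma)\cap_M\delta g(\tau)$ is not \emph{obviously} a coboundary on $K^*$ (symmetrizing the natural primitive $g\otimes\delta g$ on $K^{\times2}_\Delta$ kills it in $\Z_2$). The paper is equally terse here (``it is easy to check\dots because such maps $y$ are cohomologous''), deferring the check to the cochain level and ultimately to the cited sources. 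This wrinkle does not affect the architecture of your argument.
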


This is \cite[Proposition 21 and Theorem  15]{PT19}, \cite[Theorem 1.1.5 and Lemma 2.1.1]{Sk24}. 

Let $f:K\to\R^{2k}$ be a general position PL map. 
Take any pair of vertex-disjoint $k$-faces $\sigma,\tau$ of $K$.
By general position the intersection $f\sigma\cap f\tau$ consists of a finite number of points.
Assign to the pair $\{\sigma,\tau\}$ the residue
$$\nu(f)\{\sigma,\tau\}:=|f\sigma\cap f\tau|_2.$$
Denote by $K^*:=K^{\times2}_\Delta/s$ the set of all unordered pairs of non-adjacent 
$k$-faces of $K$.

The obtained map $\nu(f):K^*\to\Z_2$ is called the (modulo 2) {\bf intersection cocycle} of $f$. 

We consider the standard cell structure on $K^*$. 
Let $H^{2k}(K^*)=H^{2k}_{sym}(K^{\times2}_\Delta;\Z_2)$ be the group of modulo 2 cohomology classes of maps (=cocycles) $K^*\to\Z_2$.

It is classical \cite{vK32} 
that the class 
$$v_K :=[\nu(f)] \in H^{2k}(K^*)$$ 
is well-defined, i.e. is independent of the choice of $f$.

For a map $y:E_k\to H_k(M)$ define a map 
$$\omega(y):K^*\to\Z_2 \quad\text{by}\quad \omega(y)\{\sigma,\tau\} := y(\sigma)\cap_M y(\tau).$$
By Lemma \ref{l:evalc}.a for any homomorphism $\psi:H_k(K)\to H_k(M)$ there is a map $y:E_k\to H_k(M)$  such that $\psi=\widehat y$.
Define 
$$\omega(\psi) := [\omega(y)]\in H^{2k}(K^*).$$
It is easy to check that $\omega(\psi)$ is well-defined, i.e. that maps $\omega(y)$ are cohomologous for two maps $y$ giving the same homomorphism $\psi=\widehat y$ (because such maps $y$ are cohomologous).

\begin{remark}\label{r:alter} 
 We have 
$$\omega(\psi) := \Bigl((\psi\otimes\psi)^*(\cap_M)\Bigr)|_{K^{\times2}_\Delta} \in H^{2k}_{sym}(K^{\times2}_\Delta;\Z_2), \quad\text{where}$$
 
$\bullet$ $\cap_M$ is considered to be an element of $\Hom(H_k(M)\otimes H_k(M))$.

$\bullet$ we identify the following groups by the Universal Coefficients isomorphism (natural in this situation) and the K\"unneth isomorphism: 
$$\Hom(H_k(K)\otimes H_k(K))\cong H^k(K)\otimes H^k(K)\cong H^{2k}(K\times K).$$

Since $\cap_M$ is symmetric, $\omega(\psi)$  is indeed symmetric.
See \cite[Remark 2.5.2b]{KS21e}, cf. \cite{PT19}.
\end{remark}
  
Denote by $\frown:H^{2k}(K^*)\times H_{2k}(K^*)\to\Z_2$ the `scalar' product. 

\begin{proof}[Proof that (RH)$\Rightarrow$(R')]
Take a general position PL map $f\colon K\to\R^{2k}$. 
By Lemma \ref{l:evalc}.a there is a map $y:E_k\to H_k(M)$ such that $\psi=\widehat y$.
Then by (RH) 
$$y^2(C,\cap_M) = [\omega(y)]\frown C/s = [\nu(f)]\frown C/s = v(C).$$
\end{proof}

\begin{proof}[Proof that (R')$\Rightarrow$(RH)]
Take a general position PL map $f\colon K\to\R^{2k}$. 
Let $\psi=\widehat y$.
Then 
$$\omega(\psi) = [\omega(y)] = [\nu(f)] = v_K.$$
Here the 
middle equality holds because the `scalar' product $\frown$ is nondegenerate, and for any symmetric $2k$-cycle $C$ in $K^{\times2}_\Delta$ by (R') we have
$$\sum_{\{\sigma, \tau\}\in C/s} (\omega(y)-\nu(f))\{\sigma,\tau\} = y^2(C,\cap_M)-v(C) = 0.$$
\end{proof}

\begin{remark}\label{dif4} 
Theorem \ref{t:embntfk} generalizes to joins of finite sets of different sizes $n_0,\ldots,n_k\ge3$ 
(see Remark \ref{r:othcom}.c).
The proof of the `if' part of Theorem \ref{t:embntfk} generalizes by replacing $[n]^{*k+1}$ with $[n_0]*\ldots*[n_k]$ and 
'$p={n\choose2}^{k+1}$' with '$p={n_0\choose2}\ldots{n_k\choose2}$'.
\end{remark}

\aronly{

\section{On the paper \cite{Sa91}}\label{s:gendelg} 


In \cite{Sa91} proof of \cite[3.4.1]{Sa91} (Theorem \ref{r:dedu}) has the following two gaps, see Remarks \ref{gap1}.a and \ref{gap2}.b. 
We use the notation from \cite{Sa91}: $K_*$ is the deleted product ($K^{\times2}_\Delta$ of this paper). 

\begin{remark}\label{gap1} 
For edges $\alpha,\beta$ of $K$ denote by $K_{\alpha,\beta}$ the graph obtained from $K$ by \emph{subdividing} both edges $\alpha,\beta$, and then \emph{identifying} both appearing points of degree 2 to a new vertex denoted by $0$.

(a) In \cite[bottom of p. 86]{Sa91}, the statement 

\quad (*) '\emph{But $L_*$ has less symmetric circuits than $(K_z)_*$}' 

is not proved.
Here 

\qquad (0) `symmetric circuit' is non-standard terminology for `symmetric (cellular) 2-cycle'; 

\qquad (1) $K_z$ is a graph that is the support of a minimal symmetric 2-cycle $z$ in $(K_z)_*$,  and $(K_z)_*$ contains a minimal symmetric 2-cycle $w\ne z$; 

\qquad (2) $\alpha\times\beta$ is a cell from $w-z$, and $L:=(K_z)_{\alpha,\beta}$.

A `trivial proof' of (*) fails by (b). 
Although (b) is not a counterexample to (*), (b) shows that the proof of (*) cannot be trivial because it has to use the assumptions (1) and (2).  

(b) \emph{There is a graph $K$ and its disjoint edges $\alpha,\beta$ such that $K_{\alpha,\beta,*}$ contains a symmetric 2-cycle not contained in $K_*$.}
 
Indeed, let $K$ be the union of the cycle $12345$ and chord $14$. 
In $K_{12,34,*}$ the symmetric 2-cycle $C:=023\times145+145\times023$ contains $02\times14$. 
The cell $02\times14$ is not contained in $K_*$. 
Therefore, $C$ is not contained in $K_*$.\footnote{Alternatively, let $K$ be the union of $K_4$ and the paths $152$ and $364$. 
Then $K$ is planar but $K_{12,34}$ is homeomorphic to $K_5$, so $K_{12,34}$ contains a symmetric 2-cycle not contained in $K_*$.} 


(c) In \cite[\S5, proof of Lemma 5.2]{SSv3}, there is the following gap similar to the one indicated in (a). 
The following statement is not proved (and by (b) is incorrect without quite technical assumptions on the graph $L$ and the edges $e_1,e_2$ for which $L_0=L_{e_1,e_2}$):

`\emph{... symmetric $2$-cycles in~$(L_0)^{\times2}_\Delta$ are exactly symmetric $2$-cycles in~$L^{\times2}_\Delta$ not containing $(e_1,e_2)$.}' 
\end{remark}

\begin{remark}\label{gap2} 
(a) \emph{Lemma.} Define a homomorphism\footnote{The argument \cite[3.3.1 and its proof]{Sa91} only uses that $v$ is a homomorphism trivial on symmetrized tori $c_1\times c_2+c_2\times c_1$. 
However, that argument has a gap, see (b).} $v:H_2^s(K_*;\Z_2)\to\Z_2$ by $v(c):=\left<o(K),c\right>$. 
If $v$ is non-zero, then there is a subgraph $G$ of $K$ such that $G$ is homeomorphic either to $K_5$ or to $K_{3,3}$, and the restriction of $v$ to $H_2^s(G_*;\Z_2)$ is non-zero.

This lemma holds by Kuratowski criterion. 
Observe that \cite[3.3.1 and its proof]{Sa91} intends to \emph{reprove} Kuratowski criterion. 

(b) `\emph{...[The following] results from the above discussion}' before \cite[3.4.1]{Sa91} is unjustified. 
Indeed, `the above discussion' \cite[3.3.1 and its proof]{Sa91} amounts to Lemma (a).   
In order to conclude \cite[3.4.1]{Sa91} from Lemma (a), one needs (an argument relating symmetric and ordinary homology, and) the following statement \cite[after 3.4.2]{Sa91}: 

\emph{The toral circuits generate the kernel of the linear functional
$\left<o(K),\cdot\right>:H_2^s(K_*;\Z_2)\to\Z_2$.} 

This statement is non-trivial, but is presented in \cite[after 3.4.2]{Sa91} without proof or reference. 
(The statement is correct by \cite{Ho07} which proves \cite[3.4.1]{Sa91} itself.
See also Remark \ref{r:samo}.a.) 
 

An attempt to recover this gap is presented in \cite[\S5]{SSv3}, by turning the argument \cite[3.3.1 and its proof]{Sa91} (essentially attempting to prove Lemma (a) independently of Kuratowski criterion) to the proof of (the symmetrized version of) \cite[3.4.1]{Sa91} itself. 
Unfortunately, that attempt reproduced the gap \ref{gap1}.(a) as described in \ref{gap1}.(c). 
\end{remark}

\begin{remark}\label{r:samo}
(a) The following corollary of \cite[3.4.2]{Sa91} is wrong  (even for connected graphs):  

\emph{For any graph $K$ there is a cellular 2-cycle $C$ in $K_*$ such that any cellular 2-cycle in $K_*$ is a sum of some of the following cellular 2-cycles: $C$ and products of vertex-disjoint cycles.} 

Indeed, take disjoint union of two copies of $K_5$.  
(Make the disjoint union connected by joining the copies with an edge.)
Cf. \cite{HNT}. 

(b) For a description and recovery of a gap in \cite[\S2]{Sa91} see  \cite[footnote 6]{Sc13} and \cite[proof of Theorem 3.1]{MPS}. 
\end{remark}

}



\aronly{ 
 
\section{Appendix: some alternative proofs}\label{ss:appeal}

\begin{proof}[A direct proof of Lemma \ref{l:genegn}.b for $k=1$] 
Denote by $\alpha(a,b;u,v,w)$ and $\beta(a,b,c;u,v)$ the relations from the statement.
For any $b,u,v\in[n]-\{1\}$ such that $u\ne v$, one has
\begin{equation*}
\begin{aligned}
&\text{for any } a\in[n]-\{b\}&\quad&au'bv'=a1'bu'+a1'bv'+\alpha(a,b;1,u,v);\\
&\text{for any } a\in[n]-\{1,b\}&\quad&a1'bv'=11'av'+11'bv'+\beta(1,a,b;1,v).
\end{aligned}
\end{equation*}
Hence any linear relation on cycles of length 4 in $K_{n,n}$ can be reduced (by adding some of $\alpha$'s and $\beta$'s) to a relation including only cycles $11'bv'$.  
The latter relation is trivial because for any $a,b,u,v\in[n]-\{1\}$ the edge $\{a,u'\}$ belongs to the cycle $11'bv'$ if and only if $a=b$ and $u=v$.
\end{proof}

\begin{proof}[Alternative proof of Theorem \ref{p:symgrak}.b] 
Induction on $\ell$. 

Let us prove the base $\ell=1$. 
If $n=3$, then $H_1(\t{K_3})=\Ker(I+t)=\{0,\t{K_3}\}$ and $\Img(I+t)=0$. 
Assume further that $n\ge4$. 
Observe that $\t{K_n}/t=K_n$. 
Denote by $q:\t{K_n}\to K_n$ the quotient map. 
Since $t$ has no fixed points, the restriction of the induced map $q_*:H_1(\t{K_n})\to H_1(K_n)$ to the subgroup $\Ker(I+t)\subset H_1(\t{K_n})$ of $t$-symmetric cycles is an isomorphism.  
(Indeed, the inverse map $R:H_1(K_n)\to\Ker(I+t)$ is defined by $R(y):=q^{-1}(y)$.)  
Clearly, $q(\t{K_3})=(1,2,3)$, and for a cycle $m$ of length 4 in $\t{K_n}$ we have $q(m+tm)=q(m)$ is a cycle of length 4 in $K_n$. 
Hence by Lemmas \ref{l:tkn4} and \ref{l:genegn}.a $q\Img(I+t)$ is generated by cycles of length 4 in $K_n$. 

In the following paragraph we prove that \emph{\it any $1$-cycle in $K_n$ is a sum of some cycles of length~$4$ and, possibly, of the cycle $(1,2,3)$.}

The sum of two cycles of length~$3$ having a common edge is a cycle of length~$4$. 
So if a cycle $\alpha$ of length~$3$ is a sum of $(1,2,3)$ and cycles of length 4, then any cycle of length~$3$ having a common edge with $\alpha$ is such. 
Hence all cycles of length~$3$ are such. 
Now the required assertion follows because by Statement \ref{chl} any $1$-cycle in $K_n$ is a sum of some cycles of length 3. 

Then $\Ker(I+t)$ is generated by $\t{K_3}$ and $\Img(I+t)$.  
The number of edges modulo 4 defines a homomorphism $\Ker(I+t)\to\Z_4$. 
This homomorphism is 2 on $\t{K_3}$ and is 0 on $\Img(I+t)$. 
Hence $\t{K_3}\not\in\Img(I+t)\subset H_1(\t{K_n})$. 

 
The inductive step $\ell\to \ell+1$ is obtained by applying the following Lemma \ref{l:pro} to 
$$W_1=H_1(\t{K_n}),\quad W_2=H_1(\t{K_n})^{\otimes \ell},\quad x_1=\t{K_3},\quad x_2=\t{K_3}^{\otimes \ell},\quad A_1=t,\quad A_2=t^{\otimes \ell}.$$
\end{proof}

\begin{lemma}\label{l:pro} For $j=1,2$ let $W_j$ be a vector space over $\Z_2$, \ $x_j\in W_j-\{0\}$ a vector, and $A_j$ an operator in $W_j$ such that $\Ker(I+A_j)=\Img(I+A_j)\oplus\hag{x_j}$. 
Then the operator $A:=A_1\otimes A_2$ in $W:=W_1\otimes W_2$ satisfies $\Ker(I+A)=\Img(I+A)\oplus\hag{x_1\otimes x_2}$.
\end{lemma}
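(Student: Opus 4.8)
The strategy is to work with the eigenspace decomposition that the hypothesis $\Ker(I+A_j) = \Img(I+A_j) \oplus \langle x_j\rangle$ gives us. Since we are over $\Z_2$, the operator $A_j$ satisfies $(I+A_j)$ has image inside its kernel, and the hypothesis says the kernel is only one dimension larger than the image (spanned by $x_j$). The cleanest route is to choose, as in the proof of Theorem \ref{p:symgrak}.b, a basis of $W_j$ (assuming finite dimension, or working with an appropriate basis in general) that is permuted by $A_j$ with exactly one fixed vector. Concretely, I would first argue that the hypothesis forces the existence of a basis $\{x_j\} \cup \{u^j_1, A_j u^j_1, u^j_2, A_j u^j_2, \dots\}$ of $W_j$, i.e. $A_j$ acts as an involution permuting basis vectors in pairs plus fixing $x_j$. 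This follows because $A_j$ is an involution (one should check $A_j^2 = I$; if it is not assumed, note $\Ker(I+A_j) \supseteq \Img(I+A_j)$ already gives $(I+A_j)^2 = 0$, hence $A_j^2 = I$ over $\Z_2$), and any involution over $\Z_2$ on a space where the fixed subspace is exactly (image of $I+A_j$) $\oplus\langle x_j\rangle$ decomposes into such pairs and the single fixed vector $x_j$.

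Once both $W_1$ and $W_2$ have such bases, the tensor product basis of $W = W_1 \otimes W_2$ is permuted by $A = A_1 \otimes A_2$. The key combinatorial observation: a tensor basis vector $v_1 \otimes v_2$ is fixed by $A$ iff $A_1 v_1 \otimes A_2 v_2 = v_1 \otimes v_2$, which (for basis vectors being permuted) happens iff $A_1 v_1 = v_1$ and $A_2 v_2 = v_2$, i.e. iff $v_1 = x_1$ and $v_2 = x_2$. So $A$ permutes the tensor basis with exactly one fixed vector $x_1 \otimes x_2$. Now I invoke exactly the elementary linear algebra already used at the end of the proof of Theorem \ref{p:symgrak}.b: for an involution permuting a basis $y_{-m}, \dots, y_0, \dots, y_m$ (with $A y_i = y_{-i}$, $y_0$ the fixed vector), the kernel of $I+A$ has basis $y_0, y_1+y_{-1}, \dots, y_m+y_{-m}$, the image of $I+A$ has basis $y_1 + y_{-1}, \dots, y_m+y_{-m}$, and therefore $\Ker(I+A) = \Img(I+A) \oplus \langle y_0\rangle$. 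Applying this with $y_0 = x_1 \otimes x_2$ gives the conclusion.

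The main obstacle I anticipate is the reduction to the "basis permuted by an involution with one fixed vector" normal form: I need $A_j^2 = I$, which is not literally in the hypothesis, so I must derive it from $\Img(I+A_j) \subseteq \Ker(I+A_j)$ (equivalently, that every vector of the form $(I+A_j)w$ is killed by $I+A_j$). That inclusion does hold because the hypothesis writes $\Ker(I+A_j)$ as a direct sum containing $\Img(I+A_j)$, hence $\Img(I+A_j) \subseteq \Ker(I+A_j)$, hence $(I+A_j)^2 = 0$, hence $A_j^2 = I + (I+A_j)^2 = I$ over $\Z_2$. A secondary subtlety is infinite-dimensionality: if $W_j$ is infinite-dimensional one should use a Hamel basis and note all arguments are finitary (each element involves finitely many basis vectors, each relation lives in a finite-dimensional $A$-invariant subspace), so the pairing-plus-fixed-vector decomposition and the Künneth-style basis of the tensor product still go through. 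Everything else is bookkeeping that mirrors the already-written proof of Theorem \ref{p:symgrak}.b verbatim.
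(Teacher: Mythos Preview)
Your proposal is correct. Both your argument and the paper's start by deducing $(I+A_j)^2=0$ from $\Img(I+A_j)\subset\Ker(I+A_j)$, and both then decompose $W_j$ as $\langle x_j\rangle$ plus a direct sum of $2$-dimensional $A_j$-invariant blocks (the Jordan decomposition of the nilpotent $I+A_j$). The difference is in the basis chosen on each $2$-dimensional block and hence in the final step. The paper keeps the Jordan basis, so that $A_j$ acts on each $2$-block by $J_2(1)$; it then analyses the tensor product block by block, doing an explicit $4\times4$ matrix computation for the $(2\text{-block})\otimes(2\text{-block})$ case. You instead change basis on each $2$-block to $\{u,A_ju\}$, so that $A_j$ literally swaps the two basis vectors; then $A=A_1\otimes A_2$ permutes the full tensor basis with the single fixed vector $x_1\otimes x_2$, and the conclusion drops out of the same elementary observation used at the end of the proof of Theorem~\ref{p:symgrak}.b. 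Your route is a little slicker, since it replaces the $4\times4$ calculation by a one-line combinatorial remark; the paper's route is more mechanical but makes the block structure fully explicit. The one point you should state more carefully is why the single $J_1(0)$ block can be taken to be $\langle x_j\rangle$ itself (and not merely some other complement of $\Img(I+A_j)$ in $\Ker(I+A_j)$); this is immediate from the freedom in choosing the Jordan basis, but deserves a sentence.
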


\begin{proof} 
Let $N_j:=I+A_j$. 
Then $\Ker N_j=(\Img N_j)\oplus\hag{x_j}$. 
Hence $N_j^2=0$. 

Denote by $J_m(\lambda)$ the Jordan cell of size~$m$ with eigenvalue~$\lambda$. 
Take the Jordan form of $N_j$, i.e., represent~$W_j$ as a~direct sum of $N_j$-invariant subspaces $W_{j,p}$, $0\le p\le m_j$, such that

$\bullet$  $W_{j,0}=\hag{x_j}$, $N_jW_{j,0}=0$; 

$\bullet$ for every $p>0$, the subspace~$W_{j,p}$ is two-dimensional, and the operator~$N_j$ acts on it with matrix $J_2(0)$ (in some basis).

Hence the operator~$A_j$ also preserves every subspace~$W_{j,p}$, acting on it identically if $p=0$, and with matrix $J_2(1)$ (in some basis) otherwise.
So the space~$W$ is a~direct sum of $A$-invariant subspaces $W_{p_1,p_2}:=W_{1,p_1}\otimes W_{2,p_2}$, $0\le p_j\le m_j$. 
Now the lemma holds because $W_{0,0}=\hag{x_1\otimes x_2}$, $(I+A)W_{0,0}=0$, and  

(*) \qquad $\Ker(I+A|_U)=\Img(I+A|_U)$ on every other summand $U:=W_{p_1,p_2}$. 

Let us prove assertion (*). 
Denote by $M(X)$ the matrix of an operator $X$ in the considered basis. 

If either $p_1>0=p_2$ or $p_2>0=p_1$, then $U$ has a~basis $e_1,e_2$ in which~$M(A|_U)=J_2(1)$. 
So $M(I+A|_U)=J_2(0)$. 
Thus, $\Ker(I+A|_U)=\Img(I+A|_U)=\hag{e_1}$.

\newcommand{\rbmat}[1]{\begin{pmatrix}#1\end{pmatrix}}

If $p_1,p_2>0$, then $U$ has a~basis $e_1,e_2,e_3,e_4$ in which
\begin{equation*}
M(A|_U) = \rbmat{J_2(1)&J_2(1)\\0&J_2(1)} = \rbmat{1&1&1&1\\0&1&0&1\\0&0&1&1\\0&0&0&1},  \quad\text{so}\quad M(I+A|_U) = \rbmat{0&1&1&1\\0&0&0&1\\0&0&0&1\\0&0&0&0}.
\end{equation*}
Hence $\Ker(I+A|_U)=\Img(I+A|_U)=\hag{e_1,e_2+e_3}$.
\end{proof}

\begin{remark}\label{r:iso1} 
Denote by $A'$ a copy of $A$.
The {\it simplicial deleted join} $G^{*2}_\Delta$ of a complex $G=(V,F)$ is the complex with the set $V\sqcup V'$ of vertices and the set $\{\sigma*\tau'\ :\ \sigma,\tau\in F,\ \sigma\cap\tau=\varnothing\}$ of faces (here $\sigma$ and $\tau$ are considered as faces of a simplex, so that $\sigma*\tau'$ is the face spanned by all the  vertices of $\sigma$ and of $\tau'$; if $\sigma$ and $\tau$ are considered as subsets, then $\sigma*\tau'$ should be replaced by  $\sigma\sqcup\tau'$).

Analogously to the isomorphism $\varphi$ from the proof of Theorem \ref{p:symgrak}.a one proves that for complexes $A,B$ of dimensions $a,b$ the map $(A*B)^{\times2}_\Delta\to A^{*2}_\Delta\times B^{*2}_\Delta$ given by $(\alpha_1*\beta_1)\times(\alpha_2*\beta_2)\mapsto(\alpha_1*\alpha_2)\times(\beta_1*\beta_2)$ on $(a+b+2)$-cells induces an isomorphism of the groups $H_{a+b+2}$ of cellular $(a+b+2)$-cycles.
This is the analogue of \cite[Lemma 5.5.2]{Ma03} for deleted products. 

An alternative definition of the isomorphism $\varphi$ is the composition  
$$H_2((K_{n,n})^{\times2}_\Delta)\overset{(a)}\cong H_3((K_{n,n})^{*2}_\Delta)\overset{(b)}\cong H_3(\t{K_n}*\t{K_n})\overset{(c)}\cong H_2(\t{K_n}\times\t{K_n}),\quad\text{where}$$

$\bullet$ isomorphism (a) holds by \cite[Lemma 4d]{PS20}, cf. \cite[7.2.8c and 7.2.3b]{Sk}; it is induced by the map $\sigma\times\tau\mapsto\sigma*\tau$ from the set of $2$-cells to the set of  $3$-faces; 


$\bullet$ isomorphism (b) holds because $K_{n,n}=[n]^{*2}$ and $\t{K_n}=[n]^{*2}_\Delta$, so 
$(K_{n,n})^{*2}_\Delta\cong\t{K_n}*\t{K_n}$ \cite[Lemma 5.5.2]{Ma03}; the latter homeomorphism maps $(\alpha_1*\alpha_2)*(\beta_1*\beta_2)$ to $(\alpha_1*\beta_1)*(\alpha_2*\beta_2)$ for 
 $\alpha,\beta\in[n]^2$ such that $\alpha_j\ne\beta_j$ for every $j\in[2]$; 


$\bullet$ isomorphism (c) is induced by the map $\sigma*\tau\mapsto\sigma\times\tau$ from the set of $3$-faces to the set of  $2$-cells.  
\end{remark}

}

{\it In this list books, surveys, and expository papers are marked by stars}

\end{document}